\newcommand{\be}{\begin{eqnarray*}}                           
\newcommand{\en}{\end{eqnarray*}}
\newcommand{\bes}{\begin{eqnarray}}                           
\newcommand{\ens}{\end{eqnarray}}
\def\nn{\nonumber}
\newcommand{\la}{\lambda}
\newcommand{\ep}{\epsilon}
\newcommand{\coshe}{ \cosh^\ep(\canla  t)}
\newcommand{\sinhe}{ \sinh^\ep(\canla t)}
\newcommand{\sinhes}{ \sinh^\ep\left(\canla (t-s)\right)}
\newcommand{\coshh}{ \cosh \left( \sqrt{\lambda_p}t\right)}
\newcommand{\sinhh}{ \sinh \left( \sqrt{\lambda_p}t\right)}
\newcommand{\coshs}{ \cosh \left( \sqrt{\lambda_p}(t-s)\right)}
\newcommand{\sinhs}{ \sinh \left( \sqrt{\lambda_p}(t-s)\right)}
\newcommand{\canla}{  \sqrt{\lambda_p} }
\newcommand{\R}{\mathbb{R}  }
\newtheorem{theorem}{Theorem}[section]
\newtheorem{lemma}{Lemma}[section]
\newtheorem{remark}{Remark}[section]
\def\bq{\begin{equation}}
\def\eq{\end{equation}}
\def\bqq{\begin{eqnarray*}}
\def\eqq{\end{eqnarray*}}
\def\nn{\nonumber}
\begin{document}

\begin{frontmatter}
\title{ A modified integral equation method of the nonlinear elliptic equation with globally and locally Lipschitz source}

\author{Nguyen Huy Tuan$^1$,~~ Le Duc Thang $^2$,~~ Vo Anh Khoa$^{1,3}$  \\
\small $^{1}$Department of Mathematics and Computer Science, Ho Chi Minh City University of Science,\\
\small  227 Nguyen Van Cu, District 5, Ho Chi Minh City, Vietnam.\\
\small $^{2}$Faculty of Basic Science, Ho Chi Minh City Industry and Trade College,\\
\small 20 Tang Nhon Phu, District 9, Ho Chi Minh City, Vietnam.\\
\small $^{3}$Mathematics and Computer Science Division, Gran Sasso Science Institute, Viale Francesco Crispi 7, 67100 L'Aquila, Italy.\\
}

\date{\today}

\begin{abstract}
{\textcolor{red}{The paper is devoted to investigating} a Cauchy problem for  nonlinear elliptic \textcolor{red}{PDEs}   in the \textcolor{red}{abstract} Hilbert space. \textcolor{red}{The problem is hardly solved by computation since it} is severely ill-posed in the sense of Hadamard.  We shall use a modified integral equation
method to regularize the nonlinear problem with globally and locally Lipschitz source terms.   Convergence estimates are established under priori assumptions on exact solution.    A numerical test \textcolor{red}{is provided to illustrate} that the proposed method is feasible and effective.  These results extend some earlier works on a Cauchy problem for elliptic equations}\\
{\it Keywords and phrases:}  Cauchy problem; Nonlinear elliptic equation; Ill-posed problem; Error estimates.\\
{\it Mathematics subject Classification 2000:} 35K05, 35K99, 47J06, 47H10
\end{abstract}

\end{frontmatter}

\section{Introduction}
Let $H$ be a Hilbert space with the inner product $\left\langle \cdot,\cdot\right\rangle $ and the norm $\|\cdot\|$, and let $A : D(A)\subset 
H \to  H$ be a positive-definite, self-adjoint operator with \textcolor{red}{a} compact inverse on $H$.  Let $T$ be a positive number, we consider the \textcolor{red}{general} problem of finding a function $u: [0,T] \to H $ from the system 
\bq 
\left\{ \begin{gathered}
 u_{tt}=Au+f(t,u(t)), t \in   (0,T), \hfill \\
  u(0)=\varphi,  \hfill \\
 u_t(0)=g ,   \hfill\\
 \end{gathered}  \right.\label{e1}
\eq
where the  \textcolor{red}{datum}   $g, \varphi$ are given  in  $ H$ and the source function $f : [0,T] \times  H \to H$ is defined later. \\

 The problem \eqref{e1} is a generalization of \textcolor{red}{several} well-known equations. For a simple example, \textcolor{red}{taking $A=-\frac{\partial^2}{\partial x^2}$  and  $ f(x,t,u)=-k^2 u$  gives us the Helmholtz equation arising} in many engineering applications
related to propagating waves in different environments, such as acoustic, hydrodynamic
and electromagnetic waves (see, e.g. \cite{Be,Chen}). \textcolor{red}{Furthermore, if we consider the linear case, i.e. the source term does not rely on $u$, and $A$ the
second-order differential operator defined in $H_0^1(\Omega)$
\be
Au = -\sum\limits_{i,j = 1}^n {\frac{\partial }{{\partial {x_i}}}({a_{ij}}(x)\frac{\partial u }{{\partial {x_j}}}) + a(x)u},
\en
with $a_{ij}, a \in L^\infty(\Omega)$ satisfying %
\[\sum\limits_{i,j = 1}^n {{a_{ij}}{\xi _i}{\xi _j} \le \nu \sum\limits_{i = 1}^n {|{\xi _i}{|^2}} }, \]
where $\nu>0$ is given,  $a(x) \ge 0$, then we obtain the Poisson equation that obviously has been studied in a very long time in both pure and applied aspects, e.g. \cite{D+N+V+2009}.}\\
  
  \textcolor{red}{Consequently, the inverse problems governed by elliptic PDEs play important roles  in many physical and engineering problems. However, there always exists a big question concerning the fact that how to deal with the instability. In fact, such a problem like \eqref{e1} is not well-posed in the sense that a small perturbation in the given Cauchy data $(\varphi,g)$ may effect a very large error on the solution.}
Therefore, it is very difficult to solve it using classic numerical methods \textcolor{red}{and proposing regularization methods to overcome the difficulty accordingly occupied a large position in various kinds of studies until now.} \\
  
  \textcolor{red}{In the study of regularization methods, one may find} many studies on the homogeneous problem, i.e. $ f= 0$ in Eq. (1.1). For instance, Elden and Berntsson \cite{elden} used the logarithmic convexity method to obtain a stability result of H\"older type. Alessandrini
et al. \cite{ales} provided essentially optimal stability results, in
wide generality and under substantially minimal assumptions.   Reg\'inska and  Tautenhahn \cite{Re} presented some  stability estimates and regularization method  for a Cauchy problem for Helmholtz equation.  The homogeneous problems were also investigated by some earlier papers, such as \cite{Bo,Bo1,feng,D+N+V+2009,Qian,Re2,Tuan2}. 
\textcolor{red}{Very} recently, the inhomogeneous version of elliptic equations has been considered in \cite{Tuan}.

To the authors' knowledge, the results on regularization theory  for the Cauchy problem of an elliptic equation with nonlinear source like the problem \eqref{e1} are very rare and until now we \textcolor{red}{do} not find any results associated with a locally Lipschitz source term. 
\textcolor{red}{Additionally}, the nonlinear  case is, of course, more general and \textcolor{red}{quite close} to practical applications
than the homogeneous linear case.  \textcolor{red}{At once, one may deduce the elliptic-since Gordon equation by taking $ f(x,t,u)=\sin u$. In view of physical
phenomena models}, the motivation for the study of elliptic-sine Gordon equation comes from applications
in several areas of mathematical physics including the theory of Josephson effects, superconductors
and spin waves in ferromagnets, see e.g. \cite{Fo,Gu}. Furthermore, the Lane-Emden equation 
$ \Delta u = m u^{p},$ ($m$ is constant )
 plays a vital role in describing the structure of the polytropic stars where $p$ is called the polytropic index. Many abstract studies about this equation are the platform for the system Lane-Emden-Fowler arising in molecular biology, that received considerable mathematical attention, such as  Pohozaev-type identities, moving
plane method. For more details, the Lane-Emden equation can be referred to the book by Chandrasekhar \cite{book} and Emden et al. \cite{Lane}.

Motivated by all above  reasons, in the present paper we propose   a   new modified  method  based on a nonlinear integral equation   to regularize  Problem (1.1) with \textcolor{red}{both} globally and locally Lipschitz sources. As we know,  for a nonlinear Cauchy problem, its solution (exact solution)  can be represented in a nonlinear  integral equation (called mild solution) which contains some \textcolor{red}{unstable} terms.  The leading idea of this method is to find a suitable  integral equation approximating the exact solution.  The  work is \textcolor{red}{thus} to replace those terms by regularization terms and show that the solution of our regularized  problem
converges to the solution of the Cauchy problem as the regularization parameter tends to zero. \textcolor{red}{In the homogeneous case}, we have  many choices of stability term for regularization. However, for the nonlinear problem, the solution $u$ is \textcolor{red}{complicated} and defined by an integral equation \textcolor{red}{such that}  the \textcolor{red}{right hand side} depends on $u$.   This leads to studying nonlinear problem is very difficult, so in this paper we develop some new appropriate techniques.  For more \textcolor{red}{details}, we refer reader to next sections and the papers \cite{hughes}, \cite{Tuan}.

The paper is organized as follows.  In Section 2, the regularization method-integral equation method is introduced. In Section
3, a stability estimate is proved under a priori conditions of the exact solution. In Section 4, a generalized case of the nonlinear problem with a special type of local Lipschitz function  is remarkable. Then, a numerical example is shown in Section 5.

\section{Mathematical problem and Mild solution}
\textcolor{red}{As we introduced},  $A : D(A) \subset H \to H$ is  a positive-definite, self-adjoint operator with a compact inverse on $H$. \textcolor{red}{Therefore},  $A$ admits an orthonormal eigenbasis    $ \{ \phi_p \}_{p \geq 1}$ in $H,$ associated with the eigenvalues  such
that  \begin{eqnarray*}
0<\lambda_{1}\leq \lambda_{2}\leq \lambda_{3}\leq ... \lim _{p\rightarrow \infty} \lambda_{p} =\infty,
\end{eqnarray*}  
\textcolor{red}{and consequently we have $\left\langle Au(t),\phi_p \right\rangle=\la_p \left\langle u(t),\phi_p \right\rangle$ for all $p$.}

In practice, the data $(\varphi, g) \in H \times H $ \textcolor{red}{is obtained} by measuring at discrete nodes. Hence, instead of $(\varphi, g)$, we shall get an inexact data $(\varphi^\ep, g^\ep)\in H \times H $ satisfying
\bes
\|\varphi^\ep-\varphi\|_H  \le \ep,
\|g^\ep-g\|_H  \le \ep, \label{error}
\ens
 where the constant $\ep>0$ represents a bound on the measurement error.  \\
 
\textcolor{red}{For the sake of simplicity, we divide} the problem \eqref{e1} into three cases: homogeneous linear problem, inhomogeneous linear problem and nonlinear problem.
\subsection{Homogeneous linear problem}
Let $g, \varphi \in H $, we consider the problem of finding a function $u: [0,T] \to H$ satisfying
\bes
u_{tt}=Au, \label{e2222}
\ens
subject to conditions
\bq 
\left\{ \begin{gathered}
  u(0)=\varphi,\hfill \\
 u_t(0) =g.\hfill\\
 \end{gathered}  \right.\label{e222}
\eq
Let $u(t)= \sum\limits_{p=1}^\infty \left\langle u(t),\phi_p\right\rangle \phi_p$ be the Fourier series of $u$ in the Hilbert space $H$ . From \eqref{e2222}, we get the homogeneous second order differential equation as follows
\be
\frac{d^2}{dt^2} \left\langle u(t),\phi_p \right\rangle -\la_p \left\langle u(t),\phi_p \right\rangle = 0.
\en 
Solving this equation, \textcolor{red}{we deduce that}
\be
\left\langle u(t),\phi_p \right\rangle = A_p e^{\sqrt{\la_p}t}+ B_p e^{-\sqrt{\la_p}t}.
\en
It follows from  \eqref{e222} that $\left\langle u(0),\phi_p \right\rangle = \left\langle \varphi,\phi_p \right\rangle  $ and $ \left\langle u_t(0),\phi_p \right\rangle = \left\langle g,\phi_p \right\rangle$.  By a simple computation for coefficients $A_p$ and $B_p$,  we obtain
\begin{eqnarray} 
&& u(t)=\sum\limits_{p=1}^\infty \left[\coshh \left\langle \varphi,\phi_p \right\rangle + \frac{\sinhh}{\canla} \left\langle g,\phi_p \right\rangle\right]\phi_p.\nn\\ \label{exx}
\end{eqnarray}
From F. Browder terminology, as in \cite{henry}, \textcolor{red}{the expression \eqref{exx}} can be called the {\it mild solution} of \eqref{e2222}-\eqref{e222}.


\subsection{Inhomogeneous linear problem and nonlinear problem}
\textcolor{red}{When we consider the inhomogeneous case}, i.e. $ u_{tt}=Au +f(t)$,  its solution has the Fourier series $u(t)= \sum\limits_{p=1}^\infty  \left\langle u(t),\phi_p\right\rangle \phi_p$ where $\left\langle u(t),\phi_p\right\rangle$ satisfies \textcolor{red}{the following  equation}
\be
\frac{d^2}{dt^2} \left\langle u(t),\phi_p\right\rangle -\la_p \left\langle u(t),\phi_p\right\rangle=f(t).
\en 
Solving  this equation and using \eqref{e222}, we obtain the exact solution
\begin{eqnarray} 
&& u(t)=\sum\limits_{p=1}^\infty \left[\coshh \varphi_p + \frac{\sinhh}{\canla} g_p+\int\limits_{0}^{t}\frac{\sinhs}{\canla} f(s)ds\right]\phi_p,\nn\\ \label{ex1}
\end{eqnarray}
where \bes
g_p= \left\langle g,\phi_p\right\rangle ,~
\varphi_p=\left\langle g,\phi_p\right\rangle  ,~f_p(s)= \left\langle f(s), \phi_p\right\rangle. \label{hh3}
\ens
Recently, Tuan and his group \cite{Tuan} regularized  a simple version of the equation \eqref{ex1}  by truncation method and quasi-boundary value method. In \textcolor{red}{that} case, \textcolor{red}{$Au$ explicitly has the form $-\Delta u +k^2 u$} presenting a modified Helmholtz equation with inhomogeneous source. \textcolor{red}{Up to now}, some results on numerical regularization for nonlinear case is limited.  
For the nonlinear problem \bes
u_{tt}=Au+f(t,u(t)), \label{e22222}
\ens
subjects to conditions \eqref{e222}, we say that $ u \in C([0,T];H) $ is a mild solution of (2.8) if $u$ satisfies the integral equation 
\begin{eqnarray} 
&& u(t)=\sum\limits_{p=1}^\infty \left[\coshh \varphi_p + \frac{\sinhh}{\canla} g_p+\int\limits_{0}^{t}\frac{\sinhs}{\canla} f_{p}(u)(s)ds\right]\phi_p,\nn\\ \label{ex2}
\end{eqnarray}
where $f_{p}(u)(s) = \left\langle f(s,u(s)), \phi_p)\right\rangle$. 
The transformation  \eqref{e22222} into \eqref{ex2} is easily proved by a separation method which is similar above process. From now on, to regularize Problem (1.1), we only consider the integral equation \eqref{ex2} and find a regularization method for it \textcolor{red}{in Section 3}. The main idea of integral equation method can be found in the paper \cite{Trong} on \textcolor{red}{a class of} nonlinear backward heat equations. 
\section{Regularization and error estimate for nonlinear problem: globally Lipschitz source}
In this section, let $f: \R \times H \to H$ be a function satisfying
\begin{equation} \label{H1}
\|f(t,w) - f(t,v)\| \leq K \|w - v\|,
\end{equation}
for a positive constant $K$ independent of $w, v \in H,~t\in \R  $.\\
Since
$t>0$ , we know from \eqref{ex2}  that, when $p$ becomes large, the terms $$\coshh,\sinhh,\sinhs,$$ increases rather quickly. Thus, these terms
are the
unstable causes. Hence, to regularize the problem, we have to replace these terms by some better terms (called stability terms). In particular, $\coshh$, $\sinhh$ and $\sinhs$ are replaced by $\coshe$, $\sinhe$ and $\sinhes$, respectively. Those terms are defined as follows.
\be
\cosh^\ep(\canla t)&=&\frac{  Q(\ep,\lambda_p)e^{\canla t}   +e^{-\canla t} }{2}, \nn\\
\sinh^\ep(\canla t)&=&\frac{  Q(\ep,\lambda_p)e^{\canla t}   -e^{-\canla t} }{2},\nn\\
\sinh^\ep(\canla (t-s) )&=&\frac{  Q(\ep,\lambda_p)e^{\canla (t-s)}   -e^{-\canla (t-s)} }{2},\nn\\
Q(\ep,\lambda_p)&=& \frac{e^{-\sqrt{\lambda_p}T}}{ \ep+ e^{-\sqrt{\lambda_p}T} }.  \label{Q}\\
\en 
Then, we  approximate the exact solution $u$  in  \eqref{ex2} by the  regularized solution $v^\ep$ which defined by
\begin{align}
 v^\ep(t)&=\sum\limits_{p=1}^\infty \Big[\coshe \varphi_p +\frac{\sinhe}{\canla} g_p  +\int\limits_{0}^{t}\frac{\sinhes}{\canla}
 f_{p}(v^\ep)(s)ds\Big]\phi_p.         
           \label{vep}
\end{align}
We have the following lemma which shall be useful in next results.
\begin{lemma}
For $0 \le s \le t\le T$, we have the following inequalities 
\bes
 \coshe &\le&  \ep^{-\frac{t}{T}},\\
\frac{\sinhe}{\canla}  &\le&   \frac{ \ep^{\frac{-t}{T}} }{\sqrt{\la_1}},\\
\frac{\sinhes}{\canla} &\le& \frac{ \ep^{\frac{s-t}{T}} }{\sqrt{\la_1}}.
\ens
\end{lemma}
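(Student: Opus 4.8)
The plan is to reduce all three estimates to a single elementary bound on the amplification factor $Q(\ep,\lambda_p)\,e^{\canla \tau}$, valid for every exponent $0\le\tau\le T$. First I would substitute the explicit formulas \eqref{Q} for $\coshe$, $\sinhe$, $\sinhes$ and $Q(\ep,\lambda_p)$, so that each left-hand side is expressed through $Q(\ep,\lambda_p)\,e^{\canla \tau}$ (with $\tau=t$ in the first two inequalities and $\tau=t-s$ in the third), together with a harmless decaying term $e^{-\canla \tau}$ which satisfies $0\le e^{-\canla \tau}\le 1$.

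The core of the argument is the claim
\begin{equation*}
Q(\ep,\lambda_p)\,e^{\canla \tau}=\frac{e^{-\canla T}}{\ep+e^{-\canla T}}\,e^{\canla \tau}\le \ep^{-\tau/T},\qquad 0\le\tau\le T.
\end{equation*}
To prove it I would set $x=e^{-\canla T}\in(0,1)$ and $\alpha=\tau/T\in[0,1]$, and observe that $e^{\canla \tau}=x^{-\alpha}$, so the left-hand side equals $x^{1-\alpha}/(\ep+x)$. Dividing numerator and denominator by $\ep$ and writing $r=x/\ep$, the claim becomes the purely elementary inequality $r^{1-\alpha}\le 1+r$ for $r>0$ and $1-\alpha\in[0,1]$, which follows at once from $r^{\beta}\le\max(1,r)\le 1+r$ for any $\beta\in[0,1]$.

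With this bound in hand the three inequalities follow by routine bookkeeping. For the $\coshe$ estimate I use $e^{-\canla t}\le 1\le\ep^{-t/T}$ (the latter holding because $0<\ep\le 1$ and $t/T\ge 0$) to obtain $\coshe\le\frac{1}{2}\bigl(\ep^{-t/T}+\ep^{-t/T}\bigr)=\ep^{-t/T}$. For the two $\sinh$ estimates I simply discard the nonnegative subtracted term $e^{-\canla \tau}\ge 0$, apply the core bound to $Q(\ep,\lambda_p)\,e^{\canla \tau}$, and then use $\sqrt{\lambda_p}\ge\sqrt{\lambda_1}$ to replace $1/\canla$ by $1/\sqrt{\lambda_1}$; here $\tau=t$ gives the second inequality, while $\tau=t-s$ gives the third, where $(t-s)/T\in[0,1]$ since $0\le s\le t\le T$, and $\ep^{-(t-s)/T}=\ep^{(s-t)/T}$.

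The only genuine point of the proof is the core amplification bound; everything downstream is monotonicity together with the trivial inequality $r^{\beta}\le 1+r$. The one hypothesis I would make explicit — implicit in the regularization setting — is $0<\ep\le 1$, which is used solely to absorb the decaying term $e^{-\canla t}\le 1$ into $\ep^{-t/T}$ in the $\coshe$ estimate.
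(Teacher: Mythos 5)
Your proof is correct and takes essentially the same route as the paper's: both reduce everything to the single amplification bound $Q(\ep,\lambda_p)e^{\sqrt{\lambda_p}\tau}\le \ep^{-\tau/T}$ (with $\tau=t$ and $\tau=t-s$), then settle the $\cosh$ case via $e^{-\sqrt{\lambda_p}t}\le 1\le \ep^{-t/T}$ and the two $\sinh$ cases by discarding the subtracted term and using $\lambda_p\ge\lambda_1$. The only difference is cosmetic --- the paper proves the core bound by factoring $\ep+e^{-\sqrt{\lambda_p}T}$ into its $(1-\tau/T)$-th and $(\tau/T)$-th powers and absorbing the numerator, whereas you substitute $r=e^{-\sqrt{\lambda_p}T}/\ep$ and invoke $r^{1-\alpha}\le 1+r$, the same elementary fact; your explicit flagging of the implicit hypothesis $0<\ep\le 1$ is a minor improvement in precision.
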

\begin {proof}
First, we can deduce the following inequality.
\bes
Q(\ep,\lambda_p)e^{\sqrt{\lambda_p}t} & =&
 \frac{e^{-\sqrt{\lambda_p}(T-t)}}{\ep +e^{-\sqrt{\lambda_p}T}} 
=\frac{e^{-\sqrt{\lambda_p}(T-t)}}{ \Big(\ep +e^{-\sqrt{\lambda_p}T}\Big)^{1-\frac{t}{T}} \Big(\ep +e^{-\sqrt{\lambda_p}T}\Big)^{\frac{t}{T}}  } \nn\\
&\le& \Big({\ep +e^{-\sqrt{\lambda_p}T}   }  \Big)^{\frac{-t}{T}} \le  \ep^{-\frac{t}{T}}. 
\ens
Similarly, this leads to
\bes
Q(\ep,\lambda_p)e^{\sqrt{\lambda_p}(t-s)} & =&
 \frac{e^{-\sqrt{\lambda_p}(T-t+s)}}{\ep +e^{-\sqrt{\lambda_p}T}} 
=\frac{e^{-\sqrt{\lambda_p}(T-t+s)}}{ \Big(\ep +e^{-\sqrt{\lambda_p}T}\Big)^{1-\frac{t-s}{T}} \Big(\ep +e^{-\sqrt{\lambda_p}T}\Big)^{\frac{t-s}{T}}  } \nn\\
&\le& \Big({\ep +e^{-\sqrt{\lambda_p}T}   }  \Big)^{\frac{s-t}{T}} \le  \ep^{\frac{s-t}{T}}. 
\ens
We note that $ \ep^{\frac{t}{T}} \leq 1 \leq  e^{\sqrt{\lambda_p}t} $, then \textcolor{red}{the proof is obtained}. Indeed, we see that
\be
\coshe= \frac{  Q(\ep,\lambda_p)e^{\sqrt{\lambda_p}t}   +e^{-\sqrt{\lambda_p}t}}{2} \le \ep^{-\frac{t}{T}},
\en
and 
\be
\frac{\sinhe}{\canla}= \frac{  Q(\ep,\lambda_p)e^{\sqrt{\lambda_p}t} -e^{-\sqrt{\lambda_p}t}}{2\sqrt{\lambda_p}} \le \frac{ \ep^{\frac{-t}{T}} }{\sqrt{\la_1}},
\en
and
\be
 \frac{\sinhes}{\canla}=\frac{  Q(\ep,\lambda_p)e^{\sqrt{\lambda_p}(t-s)} -e^{-\sqrt{\lambda_p}(t-s)}}{2\sqrt{\lambda_p}} \le \frac{ \ep^{\frac{s-t}{T}} }{\sqrt{\la_1}}.
\en
\end{proof}
In the next theorem, we shall study the existence and uniqueness of a (weak) regularized solution $v^\ep$ to  Problem \eqref{vep}.
\begin{theorem} 
 The  integral equation \eqref{vep}
has a unique solution $v^\ep \in  C([0; T];H)$. 
\end{theorem}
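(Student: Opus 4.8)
The plan is to realize the right-hand side of \eqref{vep} as a nonlinear operator $\mathcal{G}$ on the Banach space $X = C([0,T];H)$ with the supremum norm $\|w\|_X = \sup_{t\in[0,T]}\|w(t)\|$, and to obtain $v^\ep$ as the unique fixed point of $\mathcal{G}$. For $w\in X$ I set
\[
(\mathcal{G}w)(t) = \sum_{p=1}^\infty \Big[\coshe \varphi_p + \frac{\sinhe}{\canla} g_p + \int_0^t \frac{\sinhes}{\canla} f_p(w)(s)\,ds\Big]\phi_p,
\]
so that $v^\ep$ solves \eqref{vep} if and only if $\mathcal{G}v^\ep = v^\ep$. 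Since a single application of $\mathcal{G}$ is in general \emph{not} a contraction (see the last paragraph), I would instead prove that some iterate $\mathcal{G}^n$ is a contraction and invoke the standard corollary of the Banach fixed-point theorem: if $\mathcal{G}^n$ is a contraction on a complete metric space, then $\mathcal{G}$ has a unique fixed point.

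First I would verify that $\mathcal{G}$ maps $X$ into itself. Using the three bounds of the preceding lemma (together with $\ep\in(0,1)$, so that $\ep^{(s-t)/T}\le\ep^{-1}$ for $0\le s\le t\le T$), each Fourier coefficient of $(\mathcal{G}w)(t)$ is controlled; Parseval's identity and the Cauchy--Schwarz inequality in $s$ then give
\[
\|(\mathcal{G}w)(t)\|^2 \le 3\ep^{-2}\Big(\|\varphi\|^2 + \frac{\|g\|^2}{\lambda_1} + \frac{T}{\lambda_1}\int_0^T \|f(s,w(s))\|^2\,ds\Big),
\]
and the last integral is finite because \eqref{H1} with $v=0$ yields the linear bound $\|f(s,w(s))\|\le \|f(s,0)\|+K\|w\|_X$. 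Continuity of $t\mapsto(\mathcal{G}w)(t)$ in $H$ follows from dominated convergence applied to the series together with continuity of the scalar integrands in $t$; these are routine, so $\mathcal{G}w\in X$.

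The crux is the Lipschitz estimate and its iteration. For $w,v\in X$ only the integral term survives in the difference, so by Parseval, the bound $\sinhes/\canla \le \ep^{(s-t)/T}/\sqrt{\lambda_1}\le(\sqrt{\lambda_1}\,\ep)^{-1}$, Cauchy--Schwarz, and \eqref{H1} I obtain
\[
\|(\mathcal{G}w)(t) - (\mathcal{G}v)(t)\| \le \frac{K}{\sqrt{\lambda_1}\,\ep}\int_0^t \|w(s) - v(s)\|\,ds =: C\int_0^t \|w(s)-v(s)\|\,ds.
\]
Iterating this inequality and inducting on $n$ produces the familiar Volterra-type decay
\[
\|(\mathcal{G}^n w)(t) - (\mathcal{G}^n v)(t)\| \le \frac{(Ct)^n}{n!}\,\|w - v\|_X \le \frac{(CT)^n}{n!}\,\|w-v\|_X .
\]
Since $(CT)^n/n!\to 0$ as $n\to\infty$, there is an $n$ with $(CT)^n/n!<1$, hence $\mathcal{G}^n$ is a contraction on $X$ and the cited corollary delivers a unique fixed point $v^\ep\in C([0,T];H)$.

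The main obstacle, and the reason a bare contraction argument fails, is that the regularized kernel enjoys only the bound $\sinhes/\canla \le (\sqrt{\lambda_1}\,\ep)^{-1}$, so the one-step Lipschitz constant $CT = KT/(\sqrt{\lambda_1}\,\ep)$ blows up as $\ep\to 0$ and typically exceeds $1$. The step to get right is therefore the induction that extracts the $1/n!$ gain from the repeated time integration, which tames this large constant uniformly in $\ep$; everything else (mapping into $X$, continuity of the integrand, and interchanging sum and integral) is routine.
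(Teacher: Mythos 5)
Your proposal is correct and follows essentially the same route as the paper: both define the right-hand side of \eqref{vep} as a map on $C([0,T];H)$, use the kernel bound $\frac{\sinhes}{\canla}\le\frac{1}{\ep\sqrt{\la_1}}$ from Lemma 3.1, prove by induction the Volterra-type estimate with the $\frac{1}{m!}$ gain, and conclude that some iterate is a contraction, so the map has a unique fixed point. Your version working with unsquared norms (and your explicit check that the map sends $C([0,T];H)$ into itself, which the paper leaves implicit) is a cosmetic difference only.
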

\begin{proof}
For $w \in  C([0; T];H),$ we consider the following function 
\begin{align}
F(w)(t)=\sum\limits_{p=1}^\infty \left[\coshe \varphi_p +\frac{\sinhe}{\sqrt{\la_p}} g_p+\int\limits_{0}^{t}\frac{\sinhes}{\canla}
 f_{p}(w)(s)ds\right]\phi_p. \nn
\end{align}
We claim that, for every $w, v \in C([0,T];H), m \geq 1$,
 we have
\begin{align}
\|F^m (w)(t)-F^m(v)(t)\|^2 \leq \left( \frac{K^2\ep^{-2}}{{\lambda_1}} \right)^{m}
\frac{t^m C^m}{m!}|||w -v|||^2, \label{pt1}
\end{align}
where $C = \max \{T,1\}$ and $|||.|||$ is supremum norm
in $C([0,T]; H)$.
We are going to prove this inequality by induction.
For $m =1$, using the inequality $\frac{\sinhes}{\canla} \le \frac{1}{\ep \sqrt{\la_1}}$, we obtain the following estimate
\be
\|F(w)(t)-F(v)(t)\|^2 
&=&\sum_{p=1}^\infty
\Big[\int_0^t  \frac{\sinhes}{\canla}   \left\langle f(s,w(s))-f(s,v(s)),\phi_p\right\rangle   ds\Big]^2
 \\
&\leq&  \sum_{p=1}^\infty \int_0^t
\left( \frac{\sinhes}{\canla}
\right) ^2ds \int_0^t\left|\left\langle f(s,w(s))-f(s,v(s)),\phi_p\right\rangle\right|^2 ds\\
 &\le&  \frac{t}{{\lambda_1 }\ep^2}  \int_0^t\left\|f(s,w(s))-f(s,v(s))\right\|^2 ds\nn\\
&\le&  \frac{K^2 T}{{\lambda_1} \ep^2}  \int_0^t\left\|w(s)-v(s)\right\|^2 ds \le  \frac{K^2CT}{{\lambda_1} \ep^2}   |||w-v|||^2.
\en
Thus \eqref{pt1} holds.

Next, suppose that \eqref{pt1} holds for $m =k$, then we prove that \eqref{pt1}
 holds for $m = k + 1$. We have
\begin{align*}
&\|F^{k+1}(w)(t) - F^{k+1}(v)(t)\|^2  \\
&= \sum_{p = 1}^\infty
\Big[\int_0^t \frac{\sinhes}{\canla}  \left\langle f(F^k(w))(s)
- f(F^k (v))(s),\phi_p\right\rangle   ds \Big]^2 \\
&\leq  \sum_{p=1}^\infty \int_0^t
 \left( \frac{\sinhes}{\canla}
\right) ^2ds
 \int_0^t\left|\left\langle f(F^k(w))(s)
- f(F^k (v))(s),\phi_p\right\rangle \right|^2 ds
 \\
&\le \frac{t}{{\lambda_1} \ep^2}   \int_0^t\left\|f(F^k(w))(s)
- f(F^k (v))(s)\right\|^2 ds\\
&\leq \frac{K^2 t }{{\lambda_1}\ep^2} 
 \int_0^t \left( \frac{K^2  }{{\lambda_1}\ep^2} \right)^{k}
\frac{T^k C^k}{k!}|||w -v|||^2    \\
&\leq  \left( \frac{K^2}{{\lambda_1}\ep^2} \right)^{k+1}
\frac{T^{k+1} C^{k+1}}{(k+1)!}  |||w-v|||^2.
\end{align*}
Therefore, by the induction principle, we have
\begin{equation}
\|F^m (w)(t)-F^m(v)(t)\| \leq \sqrt {\Big( \frac{K^2}{{\lambda_1}\ep^2} \Big)^{m}
\frac{T^m C^m}{m!}}|||w -v|||,
\end{equation}
for all $w, v \in C([0,T];H)$.
We consider $F: C([0,T];H)\to C([0,T];H)$.
Since
$$
\lim  _{m \to \infty} \sqrt {\Big( \frac{K^2}{{\lambda_1}\ep^2} \Big)^{m}
\frac{T^m C^m}{m!}}=0,
$$
 there exists a positive integer number $m_0$ such that
$
\sqrt {\Big( \frac{K^2}{{\lambda_1}\ep^2} \Big)^{m_0}
\frac{T^{m_0} C^{m_0}}{{m_0}!}} <1,
$
and $F^{m_0}$ is a contraction. It follows that the equation
$F^{m_0} (w) = w$ has a unique solution
$v^\epsilon \in C([0,T]; H)$.
We claim that $F(v^\epsilon) = v^\epsilon$. In fact, we have $F^{m_0} (F(v^\epsilon)) = F(v^\epsilon)$ because of $F(F^{m_0} (v^\epsilon)) = F(v^\epsilon)$. Then, the uniqueness
of the fixed point of $F^{m_0}$ leads to  $F(v^\epsilon) = v^\epsilon$;
i.e., the equation $F(w) = w$ has a unique solution
$v^\epsilon \in C([0,T];H)$. Hence, the result is proved completely.
\end{proof}
In the next Theorem, we make a study on the stability of solution of the considered problem.
\begin{theorem}
{ Let  $(\varphi, g) \in  H \times H$. Then the function $u^\ep \in C([0,T]; H)$ as in (3.11) depends continuously on $(\varphi, g)$ for any  $\ep>0$. 
Let $u^{1,\ep}$ and $u^{2,\ep}$ be two solutions of \eqref{vep} corresponding to the values $(\varphi^1, g^1)$ and $(\varphi^2, g^2)$ respectively, then
\be
\|u^{1,\ep}(t)-u^{2,\ep}(t)\|^2  \le   3 \exp\left\{ \frac{3TK^2t}  {{\la_1}} \right\} \ep^{\frac{-2t}{T}} \left(  \|\varphi^1-\varphi^2\|^2 +\frac{\|g^1-g^2\|^2  }{\la_1  }   \right).
\en
}
\end{theorem}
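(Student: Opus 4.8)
The plan is to estimate the difference $w(t) := u^{1,\ep}(t)-u^{2,\ep}(t)$ coefficientwise in the eigenbasis $\{\phi_p\}$. Writing each solution via the integral representation \eqref{vep}, the $p$-th Fourier coefficient of $w(t)$ splits into three contributions: the term $\coshe$ multiplied by $\varphi^1_p-\varphi^2_p$, the term $\sinhe/\canla$ multiplied by $g^1_p-g^2_p$, and the Duhamel integral carrying the source difference $f_p(u^{1,\ep})(s)-f_p(u^{2,\ep})(s)$. Applying the elementary inequality $(a+b+c)^2\le 3(a^2+b^2+c^2)$ (which produces the factor $3$ in the statement) together with Parseval's identity, I reduce the problem to bounding three separate sums over $p$.

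First I would dispatch the two data terms. By Lemma 3.1 one has $\coshe\le \ep^{-t/T}$ and $\sinhe/\canla\le \ep^{-t/T}/\sqrt{\la_1}$, so after summing over $p$ and invoking Parseval these contributions are controlled by $3\ep^{-2t/T}\|\varphi^1-\varphi^2\|^2$ and $3\ep^{-2t/T}\|g^1-g^2\|^2/\la_1$ respectively. For the integral term I would use the kernel bound $\sinhes/\canla\le \ep^{(s-t)/T}/\sqrt{\la_1}$ from Lemma 3.1, then the Cauchy--Schwarz inequality $\left(\int_0^t h(s)\,ds\right)^2\le t\int_0^t h(s)^2\,ds$ in the time variable. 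Summing over $p$ via Parseval and applying the global Lipschitz hypothesis \eqref{H1}, namely $\|f(s,u^{1,\ep}(s))-f(s,u^{2,\ep}(s))\|\le K\|w(s)\|$, bounds the third contribution by $\frac{3tK^2}{\la_1}\int_0^t \ep^{2(s-t)/T}\|w(s)\|^2\,ds$.

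Collecting the three estimates and writing $D(t):=\|w(t)\|^2$, I arrive at a Volterra-type inequality whose kernel carries the weight $\ep^{2(s-t)/T}$. The decisive manoeuvre is to remove the $\ep$-weights: factoring $\ep^{2(s-t)/T}=\ep^{2s/T}\ep^{-2t/T}$ and introducing $\Phi(t):=\ep^{2t/T}D(t)$ converts the estimate into the clean form $\Phi(t)\le 3\bigl(\|\varphi^1-\varphi^2\|^2+\|g^1-g^2\|^2/\la_1\bigr)+\frac{3TK^2}{\la_1}\int_0^t \Phi(s)\,ds$, where I have also replaced the coefficient $t$ by $T$. A direct application of Gronwall's inequality then gives $\Phi(t)\le 3\bigl(\|\varphi^1-\varphi^2\|^2+\|g^1-g^2\|^2/\la_1\bigr)\exp\!\left(3TK^2t/\la_1\right)$, and undoing the substitution through $D(t)=\ep^{-2t/T}\Phi(t)$ reproduces exactly the claimed bound.

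The step I expect to be the main obstacle is precisely this weighting trick. A Gronwall argument applied directly to $D(t)$ fails because the kernel $\ep^{2(s-t)/T}$ is not uniformly bounded on $[0,t]$ (it blows up like $\ep^{-2t/T}$ near $s=0$ for small $\ep$), so the Volterra inequality is not of standard type. The substitution $\Phi(t)=\ep^{2t/T}D(t)$ is exactly what renders the kernel constant in the new variable, makes the integral inequality amenable to Gronwall, and forces the exponent to emerge as $3TK^2t/\la_1$; identifying this weight is the conceptual heart of the argument, while the remaining manipulations are routine applications of Parseval, Cauchy--Schwarz, and the Lipschitz bound.
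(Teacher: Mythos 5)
Your proposal is correct and follows essentially the same route as the paper's proof: the same eigenbasis decomposition with $(a+b+c)^2\le 3(a^2+b^2+c^2)$, the kernel bounds of Lemma 3.1 combined with Cauchy--Schwarz in the time variable and the Lipschitz condition, and the same weighted substitution $\ep^{2t/T}\|u^{1,\ep}(t)-u^{2,\ep}(t)\|^2$ before applying Gronwall's inequality. The only cosmetic difference is in how the prefactor $t$ is bounded by $T$ at the Gronwall step: the paper's final display shows $\exp\left\{ \frac{3T^2K^2}{\la_1}\right\}$, while your exponent $\frac{3TK^2t}{\la_1}$ matches the theorem statement exactly.
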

\begin{proof}
For $i=1,2 $, we have
\begin{align}
 u^{i,\ep}(t)&=\sum\limits_{p=1}^\infty \left[\coshe \varphi^i_p +\frac{\sinhe}{\canla} g^i_p
 +\int\limits_{0}^{t}\frac{\sinhes}{\canla}
 f_{p}(u^{i,\ep})(s)ds\right]\phi_p .     \label{ui}          
\end{align}
It follows from \eqref{ui} that
\be
\|u^{1,\ep}(t)-u^{2,\ep}(t)\|^2&=&\sum\limits_{p=1}^\infty \Big[ \coshe (\varphi^1_p-\varphi^2_p) +\frac{\sinhe}{\canla}(g^1_p-g^2_p)\nn\\
&+&\int\limits_{0}^{t}\frac{\sinhes}{\canla}
 \Big(f_{p}(u^{1,\ep})(s)- f_{p}(u^{2,\ep})(s)  \Big)  ds\Big]^2\nn\\
&\le&  \sum\limits_{p=1}^\infty \Big[3 |\coshe|^2(\varphi^1_p-\varphi^2_p)^2 +3\Big(   \frac{\sinhe}{\canla}   \Big)^2 (g^1_p-g^2_p)^2\nn\\
&\quad +&3t\int\limits_{0}^{t}\left( \frac{\sinhes}{\canla}
\right)^2 
 \Big(f_{p}(u^{1,\ep})(s)- f_{p}(u^{2,\ep})(s)  \Big)^2  ds\Big].
\en
Using the Lipschitz property of $f$, we get the following inequality.
\be
\|u^{1,\ep}(t)-u^{2,\ep}(t)\|^2 &\le& 3 \ep^{-\frac{2t}{T}} \|\varphi^1-\varphi^2\|^2+ \frac{3}{{\la_1}} \ep^{-\frac{2t}{T}} \|g^1-g^2\|^2\nn\\
&~~~~~+&\frac{3t}  {{\la_1}} \int\limits_{0}^{t} \ep^{\frac{2s-2t}{T}} \| f(s,u^{1,\ep}(s))- f(s,u^{2,\ep}(s))    \|^2ds\nn\\
&\le&  3 \ep^{-\frac{2t}{T}}  \left(  \|\varphi^1-\varphi^2\|^2 +\frac{\|g^1-g^2\|^2  }{\la_1  }   \right)+\frac{3tK^2}  {{\la_1}} \int\limits_{0}^{t} \ep^{\frac{2s-2t}{T}} \| u^{1,\ep}(s)-u^{2,\ep}(s)\|^2ds.
\en
This implies that
\begin{align*}
\ep^{\frac{2t}{T}}\|u^{1,\ep}(t)-u^{2,\ep}(t)\|^2 
\le 3  \left(  \|\varphi^1-\varphi^2\|^2 +\frac{\|g^1-g^2\|^2  }{\la_1  }   \right)
+\frac{3tK^2}  {{\la_1}} \int\limits_{0}^{t} \ep^{\frac{2s}{T}} \| u^{1,\ep}(.,s)-u^{2,\ep}(.,s)    \|^2ds.
\end{align*}
Applying Gronwall's inequality, we have
\begin{align}
\ep^{\frac{2t}{T}}\|u^{1,\ep}(t)-u^{2,\ep}(t)\|^2  \le   3 \exp\left\{ \frac{3T^2K^2}  {{\la_1}} \right\}  \left(  \|\varphi^1-\varphi^2\|^2 +\frac{\|g^1-g^2\|^2  }{\la_1  }   \right).
\end{align}
Thus, it turns out that
\be
\|u^{1,\ep}(t)-u^{2,\ep}(t)\|^2  \le   3 \exp\left\{ \frac{3T^2K^2}  {{\la_1}} \right\} \ep^{\frac{-2t}{T}} \left(  \|\varphi^1-\varphi^2\|^2 +\frac{\|g^1-g^2\|^2  }{\la_1  }   \right).
\en
\end{proof}
\begin{theorem}
Suppose that Problem (1.1) has a weak solution $u  $ which satisfies 
\bes
\sum\limits_{p=1}^\infty e^{2\sqrt{\lambda_p}(T-t)}\Big( \left\langle u(t),\phi_p\right\rangle + \frac{\left\langle u_t(t),\phi_p\right\rangle}{\sqrt{\la_p}}   \Big)^2 \le P^2. \label{dk1}
\ens
for a positive number 
$P$, and let $(\varphi^\ep, g^\ep)  \in H\times H $ be measured data \textcolor{red}{satisfying} \eqref{error}. Then, we can construct a regularized solution $U^\ep$  such that

\bq 
 \left\{ \begin{gathered}
\|  U^\ep (t)-u(t)\| \le C \ep^{1-\frac{t}{T}}, ~~~t \in [0,T) \hfill \\
 \|  U^\ep (t)-u(t)\|  \le (D+C) \sqrt{\frac{T}{\ln (\frac{1}{\ep})}  } , ~~~t=T,  \hfill \\
 \end{gathered}  \right.\label{e11111}
\eq
where 
\be
C&=&2  P \exp\left\{\frac{ K^2 T^2}{\la_1}\right\} +\sqrt{3(1+\frac{1}{\la_1})} \exp\left\{ \frac{3TK^2}  {{2\la_1}}\right\},\\
D&= & \sup_{0\le t\le T}\|u_t(t)\|.
\en
\end{theorem}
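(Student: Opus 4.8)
The plan is to compare the measured regularized solution $U^\ep$ with the exact solution $u$ by inserting the regularized solution $v^\ep$ built from the \emph{exact} data, and splitting
\[
\no{U^\ep(t)-u(t)} \le \no{U^\ep(t)-v^\ep(t)} + \no{v^\ep(t)-u(t)}.
\]
The first summand measures sensitivity to the data error and the second measures the regularization (modelling) error. For the first summand I would invoke Theorem 3.2 directly with $(\va^1,g^1)=(\va^\ep,g^\ep)$ and $(\va^2,g^2)=(\va,g)$; inserting the measurement bounds $\no{\va^\ep-\va}\le\ep$ and $\no{g^\ep-g}\le\ep$ from \eqref{error} gives $\no{U^\ep(t)-v^\ep(t)}\le \sqrt{3(1+\la_1^{-1})}\,\exp\{3TK^2/(2\la_1)\}\,\ep^{1-t/T}$, which is the second summand of $C$ times $\ep^{1-t/T}$.

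For the regularization error I would work mode by mode. Subtracting \eqref{ex2} from \eqref{vep} and using the explicit formulas \eqref{Q}, each kernel difference $\coshh-\coshe$, $\sinhh-\sinhe$ and $\sinhs-\sinhes$ carries the common factor $1-Q(\ep,\la_p)=\ep/(\ep+e^{-\canla T})$. The key algebraic observation I would exploit is that, after grouping the two boundary terms with the source integral, the non-contractive part of $\langle v^\ep(t)-u(t),\phi_p\rangle$ collapses to
\[
-\frac{\ep}{2(\ep+e^{-\canla T})}\Big(\langle u(t),\phi_p\rangle+\frac{\langle u_t(t),\phi_p\rangle}{\canla}\Big),
\]
which is precisely the combination controlled by the a priori bound \eqref{dk1}. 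The elementary inequality $\ep+e^{-\canla T}\ge \ep^{t/T}e^{-\canla(T-t)}$ (weighted AM--GM with weights $t/T$ and $1-t/T$) then yields $\ep/(\ep+e^{-\canla T})\le \ep^{1-t/T}e^{\canla(T-t)}$, so summing in $p$ and invoking \eqref{dk1} bounds this part by $\tfrac12 P\,\ep^{1-t/T}$.

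The remaining piece is the genuinely nonlinear term $\int_0^t (\sinhes/\canla)\,(f_p(v^\ep)(s)-f_p(u)(s))\,ds$. Here I would bound $\sinhes/\canla\le \ep^{(s-t)/T}/\sqrt{\la_1}$ by Lemma 3.1, apply the Lipschitz hypothesis \eqref{H1} together with Cauchy--Schwarz, and so obtain a Volterra inequality for $\no{v^\ep(t)-u(t)}^2$. Multiplying through by $\ep^{2t/T}$ removes the singular weight and turns it into a standard Gronwall inequality for $W(t)=\ep^{2t/T}\no{v^\ep(t)-u(t)}^2$; Gronwall's lemma then gives $\no{v^\ep(t)-u(t)}\le 2P\exp\{K^2T^2/\la_1\}\,\ep^{1-t/T}$, the first summand of $C$. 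Adding the two estimates proves the interior bound $\no{U^\ep(t)-u(t)}\le C\,\ep^{1-t/T}$ for $t\in[0,T)$.

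The delicate point is the endpoint $t=T$, where $\ep^{1-t/T}=1$ and the interior estimate no longer decays. I would recover convergence by a time-shift device: for a shift $\eta=\eta(\ep)$ to be chosen, estimate
\[
\no{U^\ep(T-\eta)-u(T)}\le \no{U^\ep(T-\eta)-u(T-\eta)}+\no{u(T-\eta)-u(T)}\le C\,\ep^{\eta/T}+\eta D,
\]
where the last bound uses $\no{u(T-\eta)-u(T)}\le\int_{T-\eta}^T\no{u_t(s)}\,ds\le \eta D$ with $D=\sup_{[0,T]}\no{u_t}$. Choosing $\eta=\sqrt{T/\ln(1/\ep)}$ makes $\ep^{\eta/T}=\exp\{-\sqrt{\ln(1/\ep)/T}\}$, which is $\le\sqrt{T/\ln(1/\ep)}$ for $\ep$ small, while $\eta D=D\sqrt{T/\ln(1/\ep)}$; together these give the stated rate $(D+C)\sqrt{T/\ln(1/\ep)}$. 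I expect the algebraic collapse to the a priori-controlled combination together with the weighted Gronwall closure, and this endpoint balancing, to be the main obstacles, the rest reducing to the routine kernel estimates of Lemma 3.1.
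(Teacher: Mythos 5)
Your proposal is correct and takes essentially the same route as the paper's own proof: the same triangle-inequality splitting through the auxiliary solution $v^\ep$ built from exact data with Theorem 3.2 handling the data error, the same mode-by-mode collapse of the kernel differences onto the combination $\langle u(t),\phi_p\rangle+\langle u_t(t),\phi_p\rangle/\sqrt{\la_p}$ controlled by the a priori bound \eqref{dk1}, the same weighted Gronwall closure after multiplying by $\ep^{2t/T}$, and the same endpoint device of evaluating the regularized solution at a shifted time near $T$. The only cosmetic differences are that you retain the (correct) factor $\tfrac12$ multiplying $Q(\ep,\la_p)-1$ which the paper silently drops (harmless, since the inequality direction is unaffected), and that you choose the time shift explicitly as $\eta=\sqrt{T/\ln(1/\ep)}$ verified via $e^{-x}\le 1/x$, whereas the paper defines $t_\ep$ implicitly by $(T-t_\ep)^2=\ep^{2-2t_\ep/T}$ and invokes $\ln t>-1/t$; both yield the identical rate $(D+C)\sqrt{T/\ln(1/\ep)}$.
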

\begin{proof}
Differentiating \eqref{ex2} with respect to $t$ gives
\begin{align} 
& \Big\langle u_t(t), \phi_p \Big\rangle =\quad \sqrt{\la_p}\left[\sinhh \varphi_p +\frac{\coshh}{\canla} g_p+\int\limits_{0}^{t}\frac{\coshs}{\canla} f_{p}(u)(s)ds\right].\nn\\ \label{k11}
\end{align}
Dividing \eqref{k11} by $\sqrt{\la_p} $ and adding the result obtained to \eqref{ex2}, we get
\begin{align}
\Big\langle u(t),\phi_p\Big\rangle + \frac{\Big\langle u_t(t),\phi_p\Big\rangle}{\sqrt{\la_p}}={e^{\sqrt{\lambda_p}t}} \varphi_p +   \frac{e^{\sqrt{\lambda_p}t}}{\sqrt{\lambda_p}}      g_p+\int\limits_{0}^{t} \frac{e^{\sqrt{\lambda_p}(t-s)}} {\sqrt{\lambda_p}}
 f_{p}(u)(s)ds. \label{k12}
\end{align}
Combining  \eqref{ex2},\eqref{vep} and \eqref{k12} yields
\begin{align}
&\Big\langle v^\ep(t)-u(t),\phi_p\Big\rangle =\nn\\
&=\Big(Q(\ep,\lambda_p)-1\Big) \Big( {e^{\sqrt{\lambda_p}t}} \varphi_p +   \frac{e^{\sqrt{\lambda_p}t}}{\sqrt{\lambda_p}}      g_p +\int\limits_{0}^{t} \frac{e^{\sqrt{\lambda_p}(t-s)}} {\sqrt{\lambda_p}}
 f_{p}(u)(s)ds\Big)\nn\\
&\quad +\int\limits_{0}^{t}\frac{\sinhes}{\canla}
 f_{p}(v^\ep)(s)ds-\int\limits_{0}^{t}\frac{\sinhes}{\canla} f_{p}(u)(s)ds\nn\\
&=\Big(Q(\ep,\lambda_p)-1\Big) \Big( \left\langle u(t),\phi_p\right\rangle+ \frac{\left\langle u_t(t),\phi_p\right\rangle}{\sqrt{\la_p}}   \Big)\nn\\
&\quad + \int\limits_{0}^{t}\frac{\sinhes}{\canla} \Big(
 f_{p}(v^\ep)(s) -f_p(u)(s)\Big) ds.  \label{qt}
\end{align}
Therefore, the following inequality is obtained directly.
\begin{align}
&\Big| \left\langle v^\ep(t)-u(t), \phi_p \right\rangle \Big|^2 \nn\\
&\quad \le 2\Big(Q(\ep,\lambda_p)-1\Big)^2 \Big( \left\langle u(t),\phi_p\right\rangle + \frac{\left\langle u_t(t),\phi_p\right\rangle}{\sqrt{\la_p}}   \Big)^2 \nn\\
&\quad + 2 t^2 \int\limits_{0}^{t}\left( \frac{\sinhes}{\canla}
\right)^2 \Big(
 f_{p}(v^\ep)(s) -f_p(u)(s)\Big)^2 ds\nn\\
&\quad \le  2\ep^2  \left(\frac{e^{-\sqrt{\lambda_p}(T-t)}}{\ep +e^{-\sqrt{\lambda_p}T}} \right)^2 e^{2\sqrt{\lambda_p}(T-t)}\Big( \left\langle u(t),\phi_p\right\rangle + \frac{\left\langle u_t(t),\phi_p\right\rangle}{\sqrt{\la_p}}   \Big)^2 \nn\\
&\quad + 2 t^2 \int\limits_{0}^{t}\left( \frac{\sinhes}{\canla}
\right)^2 \Big|
 f_{p}(v^\ep)(s) -f_p(u)(s)\Big|^2 ds.\nn
\end{align}
At this time, applying Lemma 3.1 leads to
\be
\Big| \left\langle v^\ep(t)-u(t),\phi_p \right\rangle \Big|^2 &\le& 2 \ep^{\frac{2T-2t}{T}} e^{2\sqrt{\lambda_p}(T-t)}\Big( \left\langle u(t),\phi_p\right\rangle + \frac{\left\langle u_t(t),\phi_p\right\rangle}{\sqrt{\la_p}}   \Big)^2\nn\\
&~~+&2 t^2 \int\limits_{0}^{t}  \frac{ \ep^{\frac{2s-2t}{T}} }{{\la_1}}     \Big(
 f_{p}(v^\ep)(s) -f_p(u)(s)\Big)^2 ds.
\en
Using  Lipschitz property of $f$ and the priori assumption \eqref{dk1}, we get
\be
\|v^\ep(t)-u(t)\|^2 &=& \sum\limits_{p=1}^\infty  \Big| \left\langle v^\ep(t)-u(t),\phi_p\right\rangle \Big|^2\nn\\
&\le& 2 \ep^{\frac{2T-2t}{T}} \sum\limits_{p=1}^\infty e^{2\sqrt{\lambda_p}(T-t)}\left( \left\langle u(t),\phi_p\right\rangle+ \frac{\left\langle u_t(t),\phi_p\right\rangle}{\sqrt{\la_p}}   \right)^2\nn\\
&+& 2 t^2 \int\limits_{0}^{t}  \frac{ \ep^{\frac{2s-2t}{T}} }{{\la_1}} \sum\limits_{p=1}^\infty    \Big|
 f_{p}(v^\ep)(s) -f_p(u)(s)\Big|^2 ds\nn\\
&\le& 2 \ep^{\frac{2T-2t}{T}} P^2 + \frac{2 T^2}{\la_1} \int\limits_{0}^{t}  { \ep^{\frac{2s-2t}{T}} }  \| f(v^\ep)(s) -f(u)(s)\|^2 ds
\nn\\
&\le& 2 \ep^{\frac{2T-2t}{T}} P^2 + \frac{2 K^2 T^2}{\la_1} \int\limits_{0}^{t}  { \ep^{\frac{2s-2t}{T}} }  \| v^\ep(s) -u(s)\|^2 ds.
\en
Multiplying $e^{\frac{2t}{T}}$ by both sides, we thus have
\be
\ep^{\frac{2t}{T}} \|v^\ep(.,t)-u(.,t)\|^2 \le 2 \ep^2 P^2 + \frac{2 K^2 T^2}{\la_1} \int\limits_{0}^{t}  { \ep^{\frac{2s}{T}} }  \| v^\ep(s) -u(s)\|^2 ds.
\en
Applying Gronwall's inequality, we deduce that
\be
\ep^{\frac{2t}{T}} \|v^\ep(t)-u(t)\|^2 \le  2  P^2 \exp\left\{\frac{2 K^2 T^2t}{\la_1}\right\} \ep^2.
\en
By simplification, we conclude that
\bes
\|v^\ep(t)-u(t)\| \le 2  P \exp\left\{\frac{ K^2 T^2t}{\la_1}\right\} \ep^{1-\frac{t}{T}}. \label{p1}
\ens
On the other hand, notice that the following integral equation
\bes
 u^\ep(t)&=\sum\limits_{p=1}^\infty \Big[\coshe \varphi_p^\ep +\frac{\sinhe}{\canla} g_p^\ep+\int\limits_{0}^{t}\frac{\sinhes}{\canla}
 f_{p}(u^\ep)(s)ds\Big]\phi_p.          
           \label{uep}
\ens
has a unique solution $u^\ep \in C([0,T];H)$.
Then, by using Lemma 3.1, we obtain
\bes
\|u^\ep(t)-v^\ep(t)\| &\le&  \sqrt{3 \exp\left\{ \frac{3TK^2t}  {{\la_1}} \right\} \ep^{\frac{-2t}{T}} \left(  \|\varphi^\ep-\varphi\|^2 +\frac{\|g^\ep-g\|^2  }{\la_1  }   \right)}\nn\\
&\le& \sqrt{3(1+\frac{1}{\la_1})} \exp\left\{ \frac{3TK^2t}  {{2\la_1}}\right\}\ep^{1-\frac{t}{T}}. \label{p2}
\ens
Combining \eqref{p1} and \eqref{p2} and using the triangle inequality, we have
\be
\|u^\ep(t)-u(t)\|  &\le& \|u^\ep(t)-v^\ep(t)\|+\|v^\ep(t)-u(t)\|\nn\\
&\le& 2  P \exp\left\{\frac{ K^2 T^2t}{\la_1}\right\} \ep^{1-\frac{t}{T}}+\sqrt{3(1+\frac{1}{\la_1})} \exp\left\{ \frac{3TK^2t}  {{2\la_1}}\right\}\ep^{1-\frac{t}{T}}\nn\\
&\le& C \ep^{1-\frac{t}{T}}.
\en
Moreover, we get
\be
\|u(T)-u^\epsilon(t)\| &\le&  \|u(T)-u(t)\|+\|u(t)-u^\epsilon(t)\|  \\
&\le& \sup_{0\le t\le T}\|u_t(t)\|({T-t})+C \ep^{1-\frac{t}{T}}.
\en
For every $\ep>0$, there exists a unique  $t_\ep \in (0,T) $ such that 
\bes
{(T-t_\ep)^2}=\ep^{2-\frac{2t_\ep}{T}}. \label{tep}
\ens
It implies that $
\frac{\ln(T-t_\ep)}{T-t_\ep}=\frac{\ln \ep}{T}.
$
Using the inequality $\ln t > -\frac{1}{t}$ for every $t>0$, we obtain
$$
T-t_\ep<  \sqrt{\frac{T}{\ln (\frac{1}{\ep})}  }.
$$
This leads to
\bes
\|u(T)-u^\epsilon(t_\ep)\|   \le  (D+C) \sqrt{\frac{T}{\ln (\frac{1}{\ep})}  }, \label{dg1}
\ens
where $D=  \sup_{0\le t\le T}\|u_t(t)\|. $
Let $U^\ep$ be defined as follows
\bq 
U^\ep(t)= \left\{ \begin{gathered}
  u^\ep (t), ~~~t \in [0,T), \hfill \\
 u^\ep (t_\ep), ~~~t=T.  \hfill \\
 \end{gathered}  \right.
\eq
\textcolor{red}{Therefore, together with \eqref{dg1} we obtain}
\be
\|U^\ep(t)-u(t)\| = \|u^\ep (t)-u(t)\| \le C \ep^{1-\frac{t}{T}}.
\en
and 
\be
\|U^\ep(T)-u(T)\| = \|u^\ep (t_\ep)-u(t)\| \le (D+C) \sqrt{\frac{T}{\ln (\frac{1}{\ep})}  }.
\en
\end{proof}
which \textcolor{red}{are the desired estimates}.
\begin{remark}
 The condition in \eqref{dk1} is accepted and natural. 
If the source function $f=0$ then from (3.24), we have
\begin{align}
\left\langle u(t),\phi_p\right\rangle+ \frac{\left\langle u_t(t),\phi_p\right\rangle}{\sqrt{\la_p}}={e^{\sqrt{\lambda_p}t}} \varphi_p +   \frac{e^{\sqrt{\lambda_p}t}}{\sqrt{\lambda_p}}      g_p. \label{k122}
\end{align}
By letting $t=T$, we have
\begin{align}
\left\langle u(T),\phi_p\right\rangle + \frac{\left\langle u_t(T),\phi_p\right\rangle}{\sqrt{\la_p}}={e^{\sqrt{\lambda_p}T}} \varphi_p +   \frac{e^{\sqrt{\lambda_p}T}}{\sqrt{\lambda_p}}      g_p. \label{k123}
\end{align}
Combining \eqref{k122} and \eqref{k123}, we obtain
\begin{align}
{e^{\sqrt{\lambda_p}(T-t)}}\left( \left\langle u(t),\phi_p\right\rangle + \frac{\left\langle u_t(t),\phi_p\right\rangle}{\sqrt{\la_p}}\right)=\left\langle u(T),\phi_p\right\rangle+ \frac{\left\langle u_t(T),\phi_p\right\rangle}{\sqrt{\la_p}}. \label{k1233}
\end{align}
Then, it follows that
\be
\sum\limits_{p=1}^\infty e^{2\sqrt{\lambda_p}(T-t)}\Big( \left\langle u(t),\phi_p\right\rangle+ \frac{\left\langle u_t(t),\phi_p\right\rangle}{\sqrt{\la_p}}   \Big)^2&=&\sum\limits_{p=1}^\infty  \left( \left\langle u(T),\phi_p\right\rangle+ \frac{\left\langle u_t(T),\phi_p\right\rangle}{\sqrt{\la_p}} \right)^2.
\en
\end{remark}

\begin{remark}
\textcolor{red}{Our construction can be successfully applied to the modified Helmholtz equation where $A=-\frac{\partial}{\partial x^2} + k^2 Id$ and $f(x,t,u)=f(x,t)$ in \eqref{e1} over higher Sobolev spaces $H^s,s\in\mathbb{N}$ in a finite bounded and connected domain. In fact, one may observe the one-dimensional modified Helmholtz equation in the form of
\bq 
\left\{ \begin{gathered}
 u_{tt}+u_{xx}-k^2 u(x,t)=f(x,t), (x,t) \in   (0,\pi)\times(0,1), \hfill \\
 u(0,t)=u(1,t)=0, t\in (0,1),   \hfill \\
 u(x,0)=\varphi(x),\; u_{t}(x,0)=g(x), x\in(0,\pi).   \hfill\\
 \end{gathered}  \right.\label{mhe}
\eq
We thus obtain an error estimate between the exact solution and regularized solution in the space $H^{s}_{0}$ but before claiming the estimate let us consider the following lemma.}
\end{remark}

\begin{lemma}
{ Let $s>0, X\ge 0$. Then for all $0 \le t \le T$ and $0<\ep<1$, we have
\be
\frac{\ep }{(1+X)^s(\ep+e^{-TX})} \le  C(s)\left(\frac{T}{\ln(1/\ep)}\right)^s. 
\en
where $C(s)=s^s e^{1-s} (1+T^{-s})$. 
}
\end{lemma}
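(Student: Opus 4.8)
The plan is to treat the left-hand side as a function of $X$ alone---it does not depend on $t$, so the quantifier $0\le t\le T$ is irrelevant---and to bound its supremum over $X\ge 0$. Write $L:=\ln(1/\ep)>0$ (positive since $0<\ep<1$) and set
\[
G(X):=\frac{\ep}{(1+X)^s(\ep+e^{-TX})}.
\]
The single threshold that organises everything is the crossover value $X_0:=L/T$, at which $e^{-TX_0}=\ep$: for $X\le X_0$ the term $e^{-TX}$ dominates $\ep$ in the denominator, while for $X\ge X_0$ the constant $\ep$ dominates. I would bound $G$ separately on the two regions and show each is at most $C(s)(T/L)^s$.

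On the region $X\ge X_0$ I would use only $\ep+e^{-TX}\ge \ep$, which gives $G(X)\le (1+X)^{-s}$. Since $s>0$ this is decreasing in $X$, so on $X\ge X_0$ it is largest at $X=X_0$, where $(1+X_0)^{-s}=\big(T/(T+L)\big)^s\le (T/L)^s$. Because $s^se^{1-s}\ge 1$ (its logarithm $s\ln s+1-s$ is minimised at $s=1$, with value $0$), this is at most $C(s)(T/L)^s$.

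On the region $0\le X\le X_0$ I would instead use $\ep+e^{-TX}\ge e^{-TX}$, giving $G(X)\le \phi(X):=e^{TX-L}(1+X)^{-s}$. The logarithmic derivative $\phi'/\phi=T-s/(1+X)$ vanishes only at $X=s/T-1$, where it changes sign from negative to positive, so that point is a minimum of $\phi$; hence $\phi$ attains its maximum on $[0,X_0]$ at an endpoint. At $X=X_0$ one gets $\phi(X_0)=\big(T/(T+L)\big)^s\le (T/L)^s$, and at $X=0$ one gets $\phi(0)=e^{-L}=\ep$. This last quantity is controlled by the elementary identity $\max_{L>0}L^se^{-L}=s^se^{-s}$ (attained at $L=s$), which gives $\ep=L^se^{-L}/L^s\le s^se^{-s}/L^s=s^se^{-s}T^{-s}(T/L)^s$. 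Bounding the maximum of the two endpoints by their sum and using $e^{-s}\le e^{1-s}$ yields $G(X)\le \big(s^se^{1-s}+s^se^{1-s}T^{-s}\big)(T/L)^s=C(s)(T/L)^s$; the factor $1+T^{-s}$ in $C(s)$ is exactly the sum of the two endpoint contributions.

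The only genuinely delicate point is the left endpoint $X=0$: there $G$ equals $\ep$, which is exponentially small, and it must nonetheless be dominated by the merely polynomially small target $(T/L)^s$. This forces the use of the one-variable extremal identity $\max_{L>0}L^se^{-L}=s^se^{-s}$, and it is this maximisation---not any estimate at the crossover---that pins down the factor $s^se^{-s}$, hence $s^se^{1-s}$, in $C(s)$. Everything else is elementary monotonicity, so I expect no obstacle beyond correctly verifying that the interior critical point of $\phi$ is a minimum, which is what lets the maximum migrate to the endpoints.
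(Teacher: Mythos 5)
Your proof is correct, and it takes a recognizably different route from the paper's, though the two share the same key calculus input. The paper splits at the \emph{fixed} threshold $X=1/T$: for $X\le 1/T$ it bounds the quotient by $e\epsilon$ and invokes $\epsilon\le (s/e)^s(\ln(1/\epsilon))^{-s}$, while for $X>1/T$ it substitutes $e^{-TX}=\epsilon Y$, reduces matters to bounding $\frac{1}{1+Y}\bigl(\frac{-\ln\epsilon}{T-\ln(\epsilon Y)}\bigr)^s$, treats $Y\le 1$ trivially, and for $Y>1$ proves the auxiliary inequality $\frac{-\ln\epsilon}{T-\ln(\epsilon Y)}<1+\ln Y$ (this is where the restriction $X>1/T$, i.e. $\ln(\epsilon Y)<-1$, is actually needed) before maximizing $g(Y)=(1+\ln Y)^s Y^{-1}$ at $Y_0=e^{s-1}$ to produce $s^s e^{1-s}$. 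You instead split at the $\epsilon$-dependent crossover $X_0=\ln(1/\epsilon)/T$, drop the dominated summand of $\epsilon+e^{-TX}$ on each side, and finish by monotonicity and endpoint evaluation, your only nontrivial input being $\max_{L>0}L^s e^{-L}=s^s e^{-s}$ at the endpoint $X=0$. The two arguments are structurally parallel in disguise: the paper's split at $Y=1$ is exactly your crossover ($Y\le 1$ iff $X\ge X_0$), its maximization of $(1+\ln Y)^s/Y$ is your extremal identity after the substitution $u=1+\ln Y$, and its Case 1 plays the role of your $X=0$ endpoint, where the exponentially small $\epsilon$ must be traded for the merely polynomially small $(T/\ln(1/\epsilon))^s$ --- the delicate point you correctly isolate. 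What your version buys is the elimination of the substitution and of the auxiliary inequality chain, and it makes the constant transparent: the crossover endpoint contributes the factor $1$ and the origin contributes $s^s e^{1-s}T^{-s}$, whose sum is precisely $C(s)=s^s e^{1-s}(1+T^{-s})$. Both routes are elementary and yield the identical constant.
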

\begin{proof}
{\it Case 1.}  $X \in [0, \frac{1}{T}]$.  It is clear to see that
\be
\frac{\ep }{(1+X)^s(\ep+e^{-TX})}\le \frac{\ep}{(1+X)^s e^{-TX}} \le \ep e^{TX}
\le e\ep.\en
From the inequality $\ep \le (\frac{s}{e})^s  \left(\frac{1}{\ln(1/\ep)}\right)^s$, we get
\bes
\frac{\ep }{(1+X)^s(\ep+e^{-TX})} \le s^s e^{1-s} \left(\frac{1}{\ln(1/\ep)}\right)^s \le s^s e^{1-s} (1+T^{-s})\left(\frac{T}{\ln(1/\ep)}\right)^s . \label{mm1}
\ens
{\it  Case 2.} $X >\frac{1}{T}$. Set $e^{-TX}=\ep Y$. Then, we obtain
\bes
\frac{\ep }{(1+X)^s(\ep+e^{-TX})}&=&\frac{\ep }{\ep+\ep Y} \left(\frac{T}{T-\ln(\ep Y)}\right)^s\nn\\
&=&\frac{1}{1+Y} \left(\frac{T}{T-\ln(\ep Y)}\right)^s\nn\\
&=&\frac{1}{1+Y} \left(\frac{T}{\ln(1/\ep)}\right)^s\left(\frac{-\ln(\ep)}{T-\ln(\ep Y)}\right)^s\nn\\
&=&\left(\frac{T}{\ln(1/\ep)}\right)^s\frac{1}{1+Y} \left(\frac{-\ln(\ep)}{T-\ln(\ep Y)}\right)^s. \label{nn3}
\ens
We continue to estimate the term $\frac{1}{1+Y} \left(\frac{-\ln(\ep)}{T-\ln(\ep Y)}\right)^s$.\\
If $0<Y \le 1$ then $0<-\ln (\ep) <-\ln(\ep Y)$, thus  
\bes
\frac{1}{1+Y} \left(\frac{-\ln(\ep)}{T-\ln(\ep Y)}\right)^s<1, \label{nn1}
\ens
else if $Y > 1$ then $\ln Y>0$ and $\ln (\ep Y)=-TX<-1$ due to the assumption $X \in (\frac{1}{T},\infty)$.  Therefore $\ln Y (1+\ln (\ep Y) )\le 0$. This implies that
\be
0<\frac{-\ln \ep}{T-\ln (\ep Y)}<\frac{-\ln \ep}{-\ln (\ep Y)}<1+\ln Y.
\en
Hence, in this case, we get
\be
\frac{1}{1+Y} \left(\frac{-\ln(\ep)}{T-\ln(\ep Y)}\right)^s<\frac{\left(1+\ln Y\right)^s}{Y}=\left(1+\ln Y\right)^s {Y^{-1}}.
\en
We set $g(Y)=\left(1+\ln Y\right)^s {Y^{-1}}$ for $Y>e^{-1}$. Then, taking the derivative of this function is to get
\be
g'(Y)=\left(1+\ln Y\right)^{s-1} {Y^{-2}}\left(s-1-\ln Y\right)
\en
The function $g$ has maximum at the point $Y_0$ such that $g'(Y_0)=0$. This implies that
$Y_0=e^{s-1}$. Therefore, it leads to the following inequality.
\bes
\sup_{Y\ge 1} \left(1+\ln Y\right)^s {Y^{-1}} \le g(Y_0)=s^s e^{1-s}. \label{nn2}
\ens
Combining \eqref{nn1} and \eqref{nn2}, we have 
\be
\frac{1}{1+Y} \left(\frac{-\ln(\ep)}{T-\ln(\ep Y)}\right)^s \le s^s e^{1-s} .
\en
From \eqref{nn3}, we will see that
\bes
\frac{\ep }{(1+X)^s(\ep+e^{-TX})} &\le& s^s e^{1-s} \left(\frac{T}{\ln(1/\ep)}\right)^s \le C(s)\left(\frac{T}{\ln(1/\ep)}\right)^s  .\label{mm2}
\ens
\textcolor{red}{which implies the proof of lemma.}
\end{proof}

Now we shall use this lemma to evaluate the error estimate said in the above remark. Since \eqref{qt}, we obtain
\be
\Big| \left\langle v^\ep(t)-u(t),\phi_p\right\rangle \Big|^2&=&
\Big(Q(\ep,\lambda_p)-1\Big)^2 \Big(\left\langle u(t),\phi_p\right\rangle+ \frac{\left\langle u_t(t),\phi_p\right\rangle}{\sqrt{\la_p}}   \Big)^2\nn\\
&=&\frac{\ep^2}{ (1+\sqrt{\la_p} )^{2s} \Big(\ep+ e^{-\sqrt{\lambda_p}T})^2 }(1+\sqrt{\la_p} )^{2s}\Big(\left\langle u(t),\phi_p\right\rangle+ \frac{\left\langle u_t(t),\phi_p\right\rangle}{\sqrt{\la_p}}   \Big)^2.
\en 

\textcolor{red}{
Let us note that it is straightforward to check that the eigenvalues in this case are $\lambda_{p} = p^2$ then for any $u\in H^s$,
\[\sum_{p=1}^{\infty}(1+p)^{2s}|\left\langle u(t),\phi_p \right\rangle|^2 \le 2\|u\|^2_{H^s}.\]
}

Hence, we conclude that
\be
\Big\|v^\ep(t)-u(t)\Big\|^2&=& \sum_{p=1}^\infty \Big|\left\langle v^\ep(t)-u(t),\phi_p\right\rangle \Big|^2 \nn\\
&\le& C^2(s) \left(\frac{T}{\ln(1/\ep)}\right)^{2s} \sum_{p=1}^\infty (1+\sqrt{\la_p} )^{2s}\Big(\left\langle u(t),\phi_p\right\rangle+ \frac{\left\langle u_t(t),\phi_p\right\rangle}{\sqrt{\la_p}}   \Big)^2\nn\\
&\le& C^2(s) D \left(\frac{T}{\ln(1/\ep)}\right)^{2s} \left(\|u(t)\|^2_{H^s}+\|u'(t)\|^2_{H^s}\right).
\en
 
\section{Regularization and error estimate for nonlinear problem: locally Lipschitz source}

Section 3 only \textcolor{red}{regularizes} problems in which $f$ is a global Lipschitz function, \textcolor{red}{ so it still restricts the applicability of the method in} a small field of study. We can list some functions such as $f (x) = \sin x, \arctan x, \frac{1}
{x^2+1}$, then observe that the class
of space functions is very small. From the point of view, we tend to establish the error estimate for a bigger class $a(t, u)f (t, u)$ where $a(t, u)$
will be defined later.\\
We still pay more attention to the problem \eqref{e1}. Until now, we \textcolor{red}{do} not find any results associated with a local Lipschitz function in the right hand side of \eqref{e1}. \textcolor{red}{Therefore}, in this section we are going to introduce the main idea to solve  the problem \eqref{e1} with a special generalized case of source term,
\bq 
\left\{ \begin{gathered}
 u_{tt}= A u +a(t,u(t)) f(t,u(t)), t \in  (0,T), \hfill \\
  u(0)=\varphi,  \hfill \\
 u_t(0)=g    \hfill\\
 \end{gathered}  \right.\label{e11}
\eq
where $a: [0,T] \times H \to H$ satisfies that $  \| a(x,t,u)  \|    \le M  $  and   the Lipschitz condition
 \be
\| a(t,u) - a(t,v)     \|  &\le&  N    \| u-v \|, 
\en
\textcolor{red}{and} the function  $f$ satisfies the condition \eqref{H1}.\\
Let $G(t,u(t))=a(t,u(t)) f(t,u(t))$, a mild solution of \eqref{e11} satisfies the following integral equation
\begin{eqnarray} 
&& u(t)=\sum\limits_{p=1}^\infty \left[\coshh \varphi_p + \frac{\sinhh}{\canla} g_p+\int\limits_{0}^{t}\frac{\sinhs}{\canla} G_{p}(u)(s)ds\right]\phi_p\nn\\ \label{exx}
\end{eqnarray}
Our result is in the next theorem.

\begin{theorem}
Let $u$ be defined by \eqref{exx}. Then the problem
\begin{align}
 v^\ep(t)&=\sum\limits_{p=1}^\infty \Big[\coshe \varphi_p^\ep +\frac{\sinhe}{\canla} g_p^\ep  +\int\limits_{0}^{t}\frac{\sinhes}{\canla}
 G_{p}(v^\ep)(s)ds\Big]\phi_p.         
           \label{uep}
\end{align}
has a unique solution $v^\ep$ and the following estimate is obtained:
\be
\|u^\ep-u\| \le C \ep^{1-\frac{t}{T}}
\en
for C is a constant independent of $\ep$. 
\end{theorem}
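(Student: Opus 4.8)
The plan is to reduce Theorem 4.1 to the globally Lipschitz theory already developed in Section 3. The central point is that, although $f$ by itself is only assumed globally Lipschitz, the composite source $G(t,u)=a(t,u)f(t,u)$ still satisfies a \emph{global} Lipschitz estimate once one exploits both the uniform bound $\|a\|\le M$ and the Lipschitz continuity of $a$, together with boundedness of $f$ (which holds for the model nonlinearities $\sin x,\arctan x,(x^2+1)^{-1}$ highlighted above). Granting such an estimate, the integral operator appearing in \eqref{uep} has exactly the same structure as the operator $F$ of Theorem 3.1 with $f_p$ replaced by $G_p$, so that the existence/uniqueness proof of Theorem 3.1 and the convergence proof of Theorem 3.3 transcribe directly, with the Lipschitz constant $K$ replaced by the effective constant for $G$.

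First I would establish the Lipschitz bound for $G$. Writing
\[G(t,u)-G(t,v)=a(t,u)\big(f(t,u)-f(t,v)\big)+\big(a(t,u)-a(t,v)\big)f(t,v),\]
the triangle inequality together with $\|a(t,u)\|\le M$, the Lipschitz property \eqref{H1} of $f$, the Lipschitz bound $\|a(t,u)-a(t,v)\|\le N\|u-v\|$, and a uniform bound $\|f(t,v)\|\le L$ gives
\[\|G(t,u)-G(t,v)\|\le MK\|u-v\|+NL\|u-v\|=\widetilde K\|u-v\|,\]
with $\widetilde K=MK+NL$. Hence $G$ is globally Lipschitz with constant $\widetilde K$, and the a priori assumption \eqref{dk1} keeps its meaning for the present problem.

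Next I would repeat the contraction argument of Theorem 3.1: define $F(w)(t)$ as the right-hand side of \eqref{uep} with $G_p(w)(s)$ in place of $f_p(w)(s)$ and run the identical induction to obtain, for all $m\ge 1$,
\[\|F^m(w)(t)-F^m(v)(t)\|\le\sqrt{\Big(\frac{\widetilde K^2}{\la_1\ep^2}\Big)^m\frac{T^mC^m}{m!}}\,|||w-v|||,\]
where $C=\max\{T,1\}$, so that $F^{m_0}$ is a contraction for $m_0$ large enough and has a unique fixed point $v^\ep\in C([0,T];H)$. The error estimate then follows by reproducing Theorem 3.3 line for line: split $\|u^\ep(t)-u(t)\|\le\|u^\ep(t)-v^\ep(t)\|+\|v^\ep(t)-u(t)\|$, bound the unstable kernels by Lemma 3.1, insert the a priori bound \eqref{dk1}, use the Lipschitz estimate for $G$, and close with Gronwall's inequality, producing $\|u^\ep(t)-u(t)\|\le C\ep^{1-\frac{t}{T}}$ with $C$ depending on $P,M,N,K,L,T,\la_1$ but not on $\ep$.

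The step I expect to be the main obstacle is the global Lipschitz bound for $G$ itself. A product of two globally Lipschitz maps need not be globally Lipschitz: the term $\|a(t,u)-a(t,v)\|\,\|f(t,v)\|$ is controlled only when $f$ is bounded, which is exactly why boundedness of $a$ alone does not suffice and why one must restrict attention to bounded nonlinearities $f$. Were $f$ unbounded, $G$ would be merely locally Lipschitz and the argument would need modification by truncating $G$ outside a ball and proving a priori that the regularized solution stays inside that ball, a substantially more delicate continuation argument. Once the global bound on $G$ is in hand, the remainder is a routine transcription of the globally Lipschitz case.
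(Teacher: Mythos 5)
Your reduction proves a strictly weaker statement than the theorem. The hypotheses of Section 4 are only that $a$ is bounded by $M$ and $N$-Lipschitz and that $f$ satisfies the global Lipschitz condition \eqref{H1}; nothing forces $f$ to be bounded, and indeed the point of the section (announced in its opening lines) is to escape the small class of bounded nonlinearities such as $\sin x$, $\arctan x$, $(x^{2}+1)^{-1}$. A globally Lipschitz $f$ may grow linearly, in which case $G=af$ is merely locally Lipschitz and your key lemma $\|G(t,u)-G(t,v)\|\le (MK+NL)\|u-v\|$ is simply unavailable --- as you yourself flag. But your predicted repair (truncating $G$ outside a ball and a continuation argument) is not what the paper does, and the actual fix is much lighter: in the error estimates, $f$ is never evaluated at arbitrary points of $H$, only along the exact solution $u$ and along the regularized solution $v^{\epsilon}$, and both of these trajectories carry a priori bounds. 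Concretely, the paper first compares $v^{\epsilon}$ (regularized equation with \emph{exact} data) against $u$, using
\begin{align*}
\|G(t,v^{\epsilon})-G(t,u)\| &\le MK\,\|v^{\epsilon}-u\| + N\,\|f(t,u)\|\,\|v^{\epsilon}-u\|, \\
\|f(t,u(t))\| &\le \|f(t,0)\|+K\|u(t)\| \le Q+KP,
\end{align*}
where $\|u(t)\|\le P$ is extracted from the a priori condition \eqref{dk1}; Gronwall then gives $\|v^{\epsilon}(t)-u(t)\|\le 2P\exp\{\cdots\}\,\epsilon^{1-t/T}$ with the effective constant $MK+NQ+NKP$. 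Only \emph{then} does it compare $v^{\epsilon}$ with $u^{\epsilon}$ (noisy data), splitting the product the other way so that the free factor is $\|f(t,v^{\epsilon})\|$, which is now uniformly bounded because the Step-2 estimate gives $\|v^{\epsilon}(t)\|\le P+2P\exp\{\cdots\}$ (using $\epsilon^{1-t/T}\le 1$); this produces the constant $R$ of the paper. The ordering of the two steps is essential: the unknown quantity $\|f(t,u^{\epsilon})\|$ never has to be controlled. Your single global-Lipschitz reduction erases exactly this structure, and it only survives under the extra boundedness hypothesis $\|f\|\le L$ that the theorem does not grant.

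One secondary observation in your favor: the existence part. Your contraction argument for the operator with kernel $G_{p}$ is sound under your added assumption, whereas under the theorem's actual hypotheses $G$ is only locally Lipschitz and the uniform contraction estimate of Theorem 3.1 does not transcribe verbatim (the induction would involve $\|f(s,v(s))\|$ at arbitrary $v\in C([0,T];H)$, giving constants depending on $v$). The paper itself glosses over this in its Step 1 (``easily proved by fixed point theory as in Section 3 and we omit it here''), so the gap there is shared; but for the error estimate, which is the substance of the theorem, the paper's trajectory-wise bounds close the argument under the stated hypotheses while your proposal does not.
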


\begin{proof}

We divide the proof into three steps.\\
{\bf Step 1.} The existence of $v^\ep$ is easily proved by fixed point theory as in Section 3 and we omit it here.  \\
{\bf Step 2.} Error estimate $\|v^\ep-u\|$ \textcolor{red}{is obtained in this step} where $v^\ep$ satisfies\\
\begin{align}
 v^\ep(t)&=\sum\limits_{p=1}^\infty \Big[\coshe \varphi_p +\frac{\sinhe}{\canla} g_p  +\int\limits_{0}^{t}\frac{\sinhes}{\canla}
 G_{p}(v^\ep)(s)ds\Big]\phi_p.         
           \label{vepp}
\end{align}

Due to the orthonormal basis $\phi_p$ and the explicit formula of $v^{\epsilon}$ and $u$, we obtain
\begin{align}
&\left\langle v^\ep(t)-u(t),\phi_p\right\rangle=\nn\\
&=\Big(Q(\ep,\lambda_p)-1\Big) \Big( {e^{\sqrt{\lambda_p}t}} \varphi_p +   \frac{e^{\sqrt{\lambda_p}t}}{\sqrt{\lambda_p}}      g_p +\int\limits_{0}^{t} \frac{e^{\sqrt{\lambda_p}(t-s)}} {\sqrt{\lambda_p}}
 G_{p}(u)(s)ds\Big)\nn\\
&\quad +\int\limits_{0}^{t}\frac{\sinhes}{\canla}
 G_{p}(v^\ep)(s)ds-\int\limits_{0}^{t}\frac{\sinhes}{\canla} G_{p}(u)(s)ds\nn\\
&=\Big(Q(\ep,\lambda_p)-1\Big) \Big( \left\langle u(t),\phi_p\right\rangle+ \frac{\left\langle u_t(t),\phi_p\right\rangle}{\sqrt{\la_p}}   \Big)\nn\\
&\quad + \int\limits_{0}^{t}\frac{\sinhes}{\canla} \Big(
 G_{p}(v^\ep)(s) -G_p(u)(s)\Big) ds.
\end{align}
Using  Lipschitz property of $f$ and the priori assumption \eqref{dk1}, we get
\be
\|v^\ep(t)-u(t)\|^2 &=& \sum\limits_{p=1}^\infty  \Big| \left\langle v^\ep(t)-u(t),\phi_p\right\rangle \Big|^2\nn\\
&\le& 2 \ep^{\frac{2T-2t}{T}} \sum\limits_{p=1}^\infty e^{2\sqrt{\lambda_p}(T-t)}\left( \left\langle u(t),\phi_p\right\rangle + \frac{\left\langle u_t(t),\phi_p\right\rangle }{\sqrt{\la_p}}   \right)^2\nn\\
&+& 2 t^2 \int\limits_{0}^{t}  \frac{ \ep^{\frac{2s-2t}{T}} }{{\la_1}} \sum\limits_{p=1}^\infty    \Big|
 G_{p}(v^\ep)(s) -G_p(u)(s)\Big|^2 ds\nn\\
&\le& 2 \ep^{\frac{2T-2t}{T}} P^2 + \frac{2 T^2}{\la_1} \int\limits_{0}^{t}  { \ep^{\frac{2s-2t}{T}} }  \| G(v^\ep)(s) -G(u)(s)\|^2 ds.
\en
On the other hand, we have
\be
 \| G(v^\ep)(t) -G(u)(t)\| &=&  \Big\| a(t,v^\ep(t)) f(t,v^\ep(t)) -a(t,u(t)) f(t,u(t)) \Big\| \nn\\
&\le&  \Big\| a(t,v^\ep(t)) \Big\|     \Big\| f(t,v^\ep(t)) -f(t,u(t)) \Big\| +\nn\\
&&~~~~~+ \Big\| f(t,u(t)) \Big\|     \Big\| a(t,v^\ep(t)) -a(t,u(t)) \Big\|   \nn\\
&\le&  M K  \Big\| v^\ep(t) - u(t) \Big\| + N \Big\| f(t,u(t)) \Big\|   \Big\| v^\ep(t) - u(t) \Big\|. 
\en
\textcolor{red}{It follows from (3.10) that}
\be
\| f(t,u(t)) \Big\|  \le \| f(t,0) \Big\| + K  \|u(t)\| \le Q+ K  \|u(t)\|
\en
Moreover, we see that
\be
\|u(t)\|&=& \sqrt{   \sum\limits_{p=1}^\infty | \left\langle u(t), \phi_p\right\rangle | ^2  } \nn\\
&\le&  \sqrt{\sum\limits_{p=1}^\infty e^{2\sqrt{\lambda_p}(T-t)}\Big( \left\langle u(t),\phi_p\right\rangle + \frac{\left\langle u_t(t),\phi_p\right\rangle }{\sqrt{\la_p}}   \Big)^2} \le P.
\en
Therefore, we deduce
\be
\| G(v^\ep)(t) -G(u)(t)\| \le \Big( MK+ NQ+NKP \Big) \Big\| v^\ep(t) - u(t) \Big\|,
\en
\textcolor{red}{which leads to}
\be
\|v^\ep(t)-u(t)\|^2 \le 2 \ep^{\frac{2T-2t}{T}} P^2 + \frac{2 T^2}{\la_1} \Big( MK+ NQ+NKP \Big) \int\limits_{0}^{t}  { \ep^{\frac{2s-2t}{T}} }  \| v^\ep(s) -u (s)\|^2 ds.
\en
Thus, we have the following inequality.
\be
\ep^{\frac{2t}{T}} \|v^\ep(t)-u(t)\|^2 \le 2 \ep^{2} P^2 + \frac{2 T^2}{\la_1} \Big( MK+ NQ+NKP \Big) \int\limits_{0}^{t}  { \ep^{\frac{2s}{T}} }  \| v^\ep(s) -u (s)\|^2 ds.
\en
Applying Gronwall's inequality, we obtain
\be
\ep^{\frac{2t}{T}} \|v^\ep(t)-u(t)\|^2 \le 2 P^2 \exp\Big\{ \frac{2 T^2t}{\la_1} \Big( MK+ NQ+NKP \Big)   \Big\}   \ep^{2}   .
\en
Hence, we finish the step.
\bes
 \|v^\ep(t)-u(t)\| \le 2 P \exp\Big\{ \frac{ T^2t}{\la_1} \Big( MK+ NQ+NKP \Big)   \Big\}   \ep^{1-\frac{t}{T}}   . \label{ss1}
\ens

{\bf Step 3.} Error estimate $\|v^\ep-u^\ep\|$ \textcolor{red}{is obtained in this step by a similar way. Indeed,} we have
\be
 \| G(v^\ep)(t) -G(u^\ep)(t)\| &=&  \Big\| a(t,v^\ep(t)) f(t,v^\ep(t)) -a(t,u^\ep(t)) f(t,u^\ep(t)) \Big\| \nn\\
&\le&  \Big\| a(t,v^\ep(t)) \Big\|     \Big\| f(t,v^\ep(t)) -f(t,u^\ep(t)) \Big\| +\nn\\
&&~~~~~+ \Big\| f(t,v^\ep(t)) \Big\|     \Big\| a(t,v^\ep(t)) -a(t,u^\ep(t)) \Big\|   \nn\\
&\le&  M K  \Big\| v^\ep(t) - u^\ep(t) \Big\| + N \Big\| f(t,v^\ep(t)) \Big\|   \Big\| v^\ep(t) - u^\ep(t) \Big\|. 
\en
On the other hand, it is similar that from (3.10) that
\be
\| f(t,v^\ep(t)) \Big\|  &\le& \| f(t,0) \Big\| + K  \|v^\ep(t)\| \le Q+ K  \|v^\ep(t)\| \nn\\
&\le& Q+ K  \left(  \|u(t)\|+ 2 P \exp\Big\{ \frac{ T^2t}{\la_1} \Big( MK+ NQ+NKP \Big)   \Big\}   \ep^{1-\frac{t}{T}}   \right)\nn\\
&\le& Q+ K  \left( P+ 2 P \exp\Big\{ \frac{ T^3}{\la_1} \Big( MK+ NQ+NKP \Big)   \Big\}     \right).
\en
It follows that
\be
 \| G(v^\ep)(t) -G(u^\ep)(t)\| \le R  \Big\| v^\ep(t) - u^\ep(t) \Big\|
\en
where 
\be
R= M K+ NQ+ NK  \left( P+ 2 P \exp\Big\{ \frac{ T^3}{\la_1} \Big( MK+ NQ+NKP \Big)   \Big\}     \right).
\en
It follows from \eqref{ui} that
\be
\|v^\ep(t)-u^\ep(t)\|^2&=&\sum\limits_{p=1}^\infty \Big[ \coshe (\varphi^\ep_p-\varphi_p) +\frac{\sinhe}{\canla}(g^\ep_p-g_p)\nn\\
&+&\int\limits_{0}^{t}\frac{\sinhes}{\canla}
 \Big(G_{p}(v^{\ep})(s)- G_{p}(u^{\ep})(s)  \Big)  ds\Big]^2\nn\\
&\le&  \sum\limits_{p=1}^\infty \Big[3 |\coshe|^2(\varphi^\ep_p-\varphi_p)^2 +3\Big(   \frac{\sinhe}{\canla}   \Big)^2 (g^\ep_p-g_p)^2\nn\\
&\quad +&3t\int\limits_{0}^{t}\left( \frac{\sinhes}{\canla}
\right)^2 
 \Big(G_{p}(v^{\ep})(s)- G_{p}(u^{\ep})(s)  \Big)^2  ds\Big].
\en
 Using the Lipschitzian property of $f$, we get the following inequality
\be
\|v^\ep(t)-u^\ep(t)\|^2 &\le& 3 \ep^{-\frac{2t}{T}} \|\varphi^\ep-\varphi\|^2+ \frac{3}{{\la_1}} \ep^{-\frac{2t}{T}} \|g^\ep-g\|^2\nn\\
&~~~~~+&\frac{3t}  {{\la_1}} \int\limits_{0}^{t} \ep^{\frac{2s-2t}{T}} \| f(s,v^{\ep}(s))- f(s,u^{\ep}(s))    \|^2ds\nn\\
&\le&  3 \ep^{-\frac{2t}{T}}  \left(  \|\varphi^\ep-\varphi\|^2 +\frac{\|g^\ep-g\|^2  }{\la_1  }   \right)+\frac{3tR^2}  {{\la_1}} \int\limits_{0}^{t} \ep^{\frac{2s-2t}{T}} \| v^{\ep}(s)-u^{\ep}(s)\|^2ds.
\en
This implies that
\begin{align*}
\ep^{\frac{2t}{T}}\|v^\ep(t)-u^\ep(t)\|^2 
\le 3  \left(  \|\varphi^\ep-\varphi\|^2 +\frac{\|g^\ep-g\|^2  }{\la_1  }   \right)
+\frac{3tR^2}  {{\la_1}} \int\limits_{0}^{t} \ep^{\frac{2s}{T}} \| v^{\ep}(s)-u^{\ep}(s)    \|^2ds.
\end{align*}
Applying Gronwall's inequality, we have
\begin{align}
\ep^{\frac{2t}{T}}\|v^\ep(t)-u^\ep(t)\|^2 &\le   3 \exp\left\{ \frac{3TR^2t}  {{\la_1}} \right\}   \left(  \|\varphi^\ep-\varphi\|^2 +\frac{\|g^\ep-g\|^2  }{\la_1  }   \right) \nn\\
&\le 3\exp\left\{ \frac{3TR^2t}  {{\la_1}} \right\} (1+\frac{1}{\la_1}) \ep^2 .
\end{align}
By simplification, it yields
\bes
\|v^\ep(t)-u^\ep(t)\| \le \sqrt{ 3\exp\left\{ \frac{3TR^2t}  {{\la_1}} \right\} (1+\frac{1}{\la_1})  } \ep^{1-\frac{t}{T}}. \label{ss2}  
\ens
\end{proof}

Combining \eqref{ss1} and \eqref{ss2}, we thus obtain
\be
\|u^\ep(t)-u(t)\| &\le& \|v^\ep(t)-u^\ep(t)\|+ \|v^\ep(t)-u(t)\| \nn\\
&\le&  \sqrt{ 3\exp\left\{ \frac{3TR^2t}  {{\la_1}} \right\} (1+\frac{1}{\la_1})  } \ep^{1-\frac{t}{T}}+ 2 P \exp\Big\{ \frac{ T^2t}{\la_1} \Big( MK+ NQ+NKP \Big)   \Big\}   \ep^{1-\frac{t}{T}} 
\en
This completes the proof of Theorem.

\section{A numerical example}

\textcolor{red}{In this section, we provide an example}  in order to illustrate how the proposed regularized solution approximates the exact solution for nonlinear elliptic
problems. The example is involved with the operator $-\dfrac{\partial^{2}}{\partial x^{2}}$
and the domain $D\left(A\right)=H_{0}^{1}\left(0,1\right)\subset L^{2}\left(0,1\right)$. Then, the problem has the following form.

\begin{equation}
\begin{cases}
u_{tt}+u_{xx}=F\left(u\right)+G\left(x,t\right), & \left(x,t\right)\in\left(0,1\right)\times\left(0,1\right),\\
u\left(0,t\right)=u\left(1,t\right)=0, & t\in\left(0,1\right),\\
u\left(x,0\right)=\varphi\left(x\right),u_{t}\left(x,0\right)=g\left(x\right), & x\in\left(0,1\right),
\end{cases}
\end{equation}
where $F,G,\varphi$ and $g$ are given as follows.
\begin{equation}
F\left(u\right)=\frac{1}{a^{3}}u^{3},
\end{equation}
\begin{equation}
G\left(x,t\right)=2at\left(1-3x\right)-t^{3}x^{6}\left(1-x\right)^{3},
\end{equation}

\begin{equation}
\varphi\left(x\right)=0,\; g\left(x\right)=ax^{2}\left(1-x\right).
\end{equation}
This equation \textcolor{red}{would be considered as a} kind of  the Lane-Emden equations.  It is not too hard to see that the exact solution is $atx^{2}\left(1-x\right)$
where $a\in\mathbb{R}\backslash\left\{ 0\right\} $. An orthonormal
eigenbasis in $L^{2}\left(0,1\right)$ is $\phi_{p}\left(x\right)=\sqrt{2}\sin\left(\sqrt{\lambda_{p}}x\right)$
and $\lambda_{p}=p^{2}\pi^{2}$ is the corresponding eigenvalue. As
a result, choose $a=1$, we have 

\begin{equation}
u\left(x,t\right)=\sum_{p=1}^{\infty}\left[\cosh\left(\sqrt{\lambda_{p}}t\right)\varphi_{p}+\frac{\sinh\left(\sqrt{\lambda_{p}}t\right)}{\sqrt{\lambda_{p}}}g_{p}+\int_{0}^{t}\frac{\sinh\left(\sqrt{\lambda_{p}}\left(t-s\right)\right)}{\sqrt{\lambda_{p}}}f_{p}\left(u\right)\left(s\right)ds\right]\phi_{p}\left(x\right),
\end{equation}

where

\[
\varphi_{p}=\int_{0}^{1}\varphi\left(x\right)\phi_{p}\left(x\right)dx,\; g_{p}=\int_{0}^{1}g\left(x\right)\phi_{p}\left(x\right)dx,\; f_{p}\left(u\right)\left(s\right)=\int_{0}^{1}\left[F\left(u\right)+G\left(x,s\right)\right]\phi_{p}\left(x\right)dx.
\]

\begin{remark}
We shall approximate the regularized solution by taking perturbation
numbers in data function by two ways. The perturbation is intended
to define as $\epsilon\mbox{rand}\left(.\right)$ where each random
term rand$\left(.\right)$ will be determined on $\left[-1,1\right]$
uniformly, i.e.

\begin{eqnarray*}
f^{\epsilon}\left(.\right) & = & f\left(.\right)+\epsilon\mbox{rand}\left(.\right),\\
f^{\epsilon}\left(.\right) & = & f\left(.\right)\left(1+\frac{\epsilon\mbox{rand}\left(.\right)}{\left\Vert f\right\Vert }\right).
\end{eqnarray*}

In particular, we let

\[
\varphi^{\epsilon}\left(x\right)=\epsilon\mbox{rand}\left(.\right),\; g^{\epsilon}\left(x\right)=g\left(x\right)\left(1+\sqrt{105}\epsilon\mbox{rand}\left(.\right)\right).
\]

\end{remark}

\begin{remark}
The aim of the numerical experiments is to observe $\epsilon=10^{-r}$
where $r=\overline{1,10}$. The couple of $\left(\varphi^{\epsilon},g^{\epsilon}\right)$
plays as measured data with a random noise. Then, the regularized
solution is expected to be closed to the exact solution under
a proper discretization. 
\end{remark}
As we introduced, we proceed to define stability terms. Those are

\begin{equation}
\cosh^{\epsilon}\left(\sqrt{\lambda_{p}}t\right)=\frac{Q\left(\epsilon,\lambda_{p}\right)e^{\sqrt{\lambda_{p}}t}+e^{-\sqrt{\lambda_{p}}t}}{2},\label{QQ}
\end{equation}

\begin{equation}
\sinh^{\epsilon}\left(\sqrt{\lambda_{p}}t\right)=\frac{Q\left(\epsilon,\lambda_{p}\right)e^{\sqrt{\lambda_{p}}t}-e^{-\sqrt{\lambda_{p}}t}}{2},
\end{equation}

\begin{equation}
\sinh^{\epsilon}\left(\sqrt{\lambda_{p}}\left(t-s\right)\right)=\frac{Q\left(\epsilon,\lambda_{p}\right)e^{\sqrt{\lambda_{p}}\left(t-s\right)}-e^{-\sqrt{\lambda_{p}}\left(t-s\right)}}{2},
\end{equation}

\begin{equation}
Q\left(\epsilon,\lambda_{p}\right)=\frac{e^{-\sqrt{\lambda_{p}}}}{\epsilon+e^{-\sqrt{\lambda_{p}}}}.\label{QQQ}
\end{equation}

Therefore, we have the regularized solution.

\begin{equation}
v^{\epsilon}\left(x,t\right)=\sum_{p=1}^{\infty}\left[\cosh^{\epsilon}\left(\sqrt{\lambda_{p}}t\right)\varphi_{p}^{\epsilon}+\frac{\sinh^{\epsilon}\left(\sqrt{\lambda_{p}}t\right)}{\sqrt{\lambda_{p}}}g_{p}^{\epsilon}+\int_{0}^{t}\frac{\sinh^{\epsilon}\left(\sqrt{\lambda_{p}}\left(t-s\right)\right)}{\sqrt{\lambda_{p}}}f_{p}\left(v^{\epsilon}\right)\left(s\right)ds\right]\phi_{p}\left(x\right). \label{regu}
\end{equation}

After dividing the time \textcolor{red}{interval into equal subintervals} $t_{i}=i\Delta t,\Delta t=\dfrac{1}{M},i=\overline{0,M}$, \textcolor{red}{the nonlinear term in \eqref{regu}
\[
\int_{0}^{t_i}\frac{\sinh^{\epsilon}\left(\sqrt{\lambda_{p}}\left(t_i-s\right)\right)}{\sqrt{\lambda_{p}}}f_{p}\left(v^{\epsilon}\right)\left(s\right)ds 
= \sqrt{2}\int_{0}^{t_i}\frac{\sinh^{\epsilon}(p\pi(t_i-s))}{p\pi}\int_{0}^{1}(F(v^{\epsilon})(x,s)+G(x,s))\sin(p\pi x)dxds,
\]
can be approximated by an iterative scheme
\begin{equation}
\frac{\sqrt{2}}{p\pi}\sum_{j=1}^{i}\int_{t_{j-1}}^{t_{j}}\int_{0}^{1}\sinh^{\epsilon}\left(p\pi\left(t_{i}-s\right)\right)\left(F(v^{\epsilon})(x,t_{j-1})+G\left(x,s\right)\right)\sin\left(p\pi x\right)dxds. \label{app}
\end{equation}
Let $N$ be a cut-off constant which will be discussed later, then \eqref{regu} gives
\begin{eqnarray}
v_{N}^{\epsilon}\left(x,t_{i}\right) & = & v_{N,i}^{\epsilon}\left(x\right)\nonumber \\
 & = & w_{1,i}^{\epsilon}\sin\left(\pi x\right)+w_{2,i}^{\epsilon}\sin\left(2\pi x\right)+...+w_{N,i}^{\epsilon}\sin\left(N\pi x\right)\nonumber \\
 & = & \begin{bmatrix}w_{1,i}^{\epsilon} & w_{2,i}^{\epsilon} & \cdots & w_{N,i}^{\epsilon}\end{bmatrix}\begin{bmatrix}\sin\left(\pi x\right)\\
\sin\left(2\pi x\right)\\
\vdots\\
\sin\left(N\pi x\right)
\end{bmatrix},
\end{eqnarray}
where $w_{p,i}^{\epsilon}$ is defined by} 

\begin{eqnarray}
\frac{1}{2}w_{p,i}^{\epsilon} & = & \cosh^{\epsilon}\left(p\pi t_{i}\right)\int_{0}^{1}\varphi^{\epsilon}\left(x\right)\sin\left(p\pi x\right)dx+\frac{\sinh^{\epsilon}\left(p\pi t_{i}\right)}{p\pi}\int_{0}^{1}g^{\epsilon}\left(x\right)\sin\left(p\pi x\right)dx\nonumber \\
 &  & +\frac{1}{p\pi}\sum_{j=1}^{i}\int_{t_{j-1}}^{t_{j}}\int_{0}^{1}\sinh^{\epsilon}\left(p\pi\left(t_{i}-s\right)\right)\left(\left(v_{N}^{\epsilon}\left(x,t_{j-1}\right)\right)^{3}+G\left(x,s\right)\right)\sin\left(p\pi x\right)dxds,
 \label{wap}
\end{eqnarray}

\begin{equation}
v_{N}^{\epsilon}\left(x,t_{0}\right)=v_{N,0}^{\epsilon}\left(x\right)=\varphi^{\epsilon}\left(x\right).\label{guess}
\end{equation}

\textcolor{red}{Here we have introduced an iterative scheme of regularized solution \eqref{regu} followed by the time-step approximation \eqref{app} with initial guess \eqref{guess}. We also note that the first and second terms in the right hand side of \eqref{wap} come from $\varphi_p^{\epsilon}$ and $g_p^{\epsilon}$ and \eqref{QQ}-\eqref{QQQ}. Since $\sqrt{2}$ from the eigenbasis $\phi_p$ appears twice in \eqref{regu}, we put it into the left hand side of \eqref{wap} for the sake of simple computation.}

Finally, let $x_{j}=j\Delta x,\Delta x=\dfrac{1}{K},j=\overline{0,K}$,
\textcolor{red}{we obtain a fully discretization of regularized solution as follows:}

\begin{equation}
v_{N,i}^{\epsilon}\left(x_{j}\right)\equiv v_{N}^{\epsilon}\left(x_{j},t_{i}\right)=w_{1,i}^{\epsilon}\sin\left(\pi x_{j}\right)+w_{2,i}^{\epsilon}\sin\left(2\pi x_{j}\right)+...+w_{N,i}^{\epsilon}\sin\left(N\pi x_{j}\right).
\end{equation}

The whole process is concluded into four steps.

\textbf{Step 1.} Have $\epsilon$, choose $N=N_{0},K=K_{0}$ and $M=M_{0}$
respectively. We get

\begin{equation}
x_{j}=j\Delta x,\Delta x=\dfrac{1}{K},j=\overline{0,K},
\end{equation}

\begin{equation}
t_{i}=i\Delta t,\Delta t=\dfrac{1}{M},i=\overline{0,M}.
\end{equation}

\textbf{Step 2.} Put $v_{N}^{\epsilon}\left(x,t_{i}\right)=v_{N,i}^{\epsilon}\left(x\right),i=\overline{0,M}$
and $v_{N,0}^{\epsilon}\left(x\right)=\varphi^{\epsilon}\left(x\right)$.
We find out

\begin{equation}
V_{N}^{\epsilon}\left(x\right)=\left[v_{N,0}^{\epsilon}\left(x\right),v_{N,1}^{\epsilon}\left(x\right),...,v_{N,M}^{\epsilon}\left(x\right)\right]^{T}\in\mathbb{R}^{M+1}.
\end{equation}

\textbf{Step 3.} For $j=\overline{0,K},$ put $v_{N,i}^{\epsilon}\left(x_{j}\right)=v_{N,i,j}^{\epsilon}$,
we present

\begin{eqnarray}
U_{N,M,K}^{\epsilon} & = & \left[v_{N,0}^{\epsilon}\left(x_{j}\right),v_{N,1}^{\epsilon}\left(x_{j}\right),...,v_{N,M}^{\epsilon}\left(x_{j}\right)\right]\\
 & = & \begin{bmatrix}v_{N,0,0}^{\epsilon} & v_{N,0,1}^{\epsilon} & \cdots & v_{N,0,K}^{\epsilon}\\
v_{N,1,0}^{\epsilon} & v_{N,1,1}^{\epsilon} & \cdots & v_{N,1,K}^{\epsilon}\\
\vdots & \vdots & \ddots & \vdots\\
v_{N,M,0}^{\epsilon} & v_{N,M,1}^{\epsilon} & \ldots & v_{N,M,K}^{\epsilon}
\end{bmatrix}\in\mathbb{R}^{M+1}\times\mathbb{R}^{K+1}.
\end{eqnarray}

\textbf{Step 4.} Calculate the error

\begin{equation}
E_{N}^{\epsilon}\left(t_{i}\right)=\sqrt{\sum_{j=0}^{K}\left|v_{N}^{\epsilon}\left(x_{j},t_{i}\right)-u_{ex}\left(x_{j},t_{i}\right)\right|^{2}},\quad i=\overline{0,M}.
\end{equation}

Figure 2, Figure 3 and Figure
4 illustrate regularized solutions in 2D graph
in many cases which are known in each caption. On the other hand, in Figure 1
and Figure 5, these solutions and exact solution in 3D graph are represented. Although we consider
\textcolor{red}{several} values of noise in Table 1, showing 2D and 3D graphs is stopped
at a reasonable \textcolor{red}{cut-off number} by observation. We only show $r=\overline{1,4}$
for both 2D and 3D graphs.

In this example, we simply choose a slightly coarse grid $M=12,K=20$
because we want to reduce computational workloads. However, before
deciding to choose those, we make a test with a finer grid $M=16,K=30$
then even it is worse. In particular, for $\epsilon=10^{-4}$, $E_{N}^{\epsilon}\left(\dfrac{1}{2}\right)$
in the coarse grid is $1.3001169841\times10^{-2}$ while $1.5925046512\times10^{-2}$
is for the finer. Also, for $\epsilon=10^{-5}$, $E_{N}^{\epsilon}\left(\dfrac{1}{4}\right)$
in the coarse is $6.3529040819\times10^{-3}$ while $7.7873931681\times10^{-3}$
is for the finer one. 

We note that larger $N$ mostly leads to better approximation. For
example, we merely choose $N=2$, then \textcolor{red}{in} Figure
2-Figure 4, the regularized solution still do not fit the exact solution
completely. Thus, we consider two supplement cases $N=3$ and $N=4$
which are shown in the Figure 6-Figure 8 and Table 2. They are all
extremely better than what we obtain in the Table 1 and Figure 2-Figure
4. However, based on the advice in \cite{Ting}, we should choose $N=4$
or $N=5$ to not only get the whole desired, but also reduce computational
workloads.

~~\\\
\section{Conclusion}

In this paper, we propose a method of integral  equation  to solve the Cauchy problem
for elliptic equations with nonlinear source.  This problem may be difficult and there are few results on the regularized problem.  From that point, we aim to consider the regularization method for this problem in theoretical framework. The convergence results have been presented for the cases of $0 \le t < T $  and $t=T$ under some assumptions for the exact solution. However, our method still has a little theoretical range since the class of function $f$ is still
small. This makes the applicability of the method very narrow. Moreover, in the numerical result, there is an issue about choosing the truncation number which plays a role in regularization effect. In our future research, we will, therefore, consider the regularized problem in the
case where $f$ is a general locally Lipchitz function and study the theoretical analysis regarding the influence of the truncation term.\\

\section*{Acknowledgments}  This work is supported by Vietnam National University Ho Chi Minh City (VNU-HCM) under Grant No. B2014-18-01. \textcolor{red}{The third author, Vo Anh Khoa, would like to give many thanks to his old professor, Prof. Dang Duc Trong, for the whole-hearted guidance when the author was studying at Department of Mathematics and Computer Science, Ho Chi Minh City University of Science, Vietnam. Also, the authors desire to thank the handling editor and anonymous referees
for their most helpful comments on this paper.}

\newpage

\noindent \begin{center}
\begin{table}
\noindent \begin{centering}
\begin{tabular}{|c|c|c|c|}
\hline 
$\epsilon$ & $E_{N}^{\epsilon}\left(\dfrac{1}{2}\right)$ & $E_{N}^{\epsilon}\left(\dfrac{1}{4}\right)$ & $E_{N}^{\epsilon}\left(\dfrac{3}{4}\right)$\tabularnewline
\hline 
1.0E-01 & 3.6697975496E-01 & 2.6038073148E-01 & 6.3238102008E-01\tabularnewline
\hline 
1.0E-02 & 5.7702326175E-02 & 3.4823150118E-02 & 8.6516725190E-02\tabularnewline
\hline 
1.0E-03 & 2.1897073639E-02 & 1.2765356426E-02 & 3.2397554936E-02\tabularnewline
\hline 
1.0E-04 & 1.3001169841E-02 & 6.5571743949E-03 & 1.9598409706E-02\tabularnewline
\hline 
1.0E-05 & 1.2704207763E-02 & 6.3529040819E-03 & 1.9055362258E-02\tabularnewline
\hline 
\end{tabular}
\par\end{centering}

\caption{Errors between the regularized solution and exact solution for $t=\dfrac{1}{4};\dfrac{1}{2};\dfrac{3}{4}$.}
\end{table}

\par\end{center}

\begin{table}
\noindent \begin{centering}
\begin{tabular}{|c|c|c|c|}
\hline 
$\epsilon=10^{-4}$ & $E_{N}^{\epsilon}\left(\dfrac{1}{2}\right)$ & $E_{N}^{\epsilon}\left(\dfrac{1}{4}\right)$ & $E_{N}^{\epsilon}\left(\dfrac{3}{4}\right)$\tabularnewline
\hline 
$N=3$ & 1.0711862180E-02 & 5.5059970016E-03 & 1.5931956961E-02\tabularnewline
\hline 
$N=4$ & 3.8073451089E-03 & 1.9328671997E-03 & 5.6915418779E-03\tabularnewline
\hline 
\end{tabular}
\par\end{centering}

\caption{Errors between the regularized solution and exact solution at $t=\dfrac{1}{4};\dfrac{1}{2};\dfrac{3}{4}$
for $N=3;4$.}
\end{table}

\noindent \begin{center}
\begin{figure}
\noindent \begin{centering}
\includegraphics[scale=0.7]{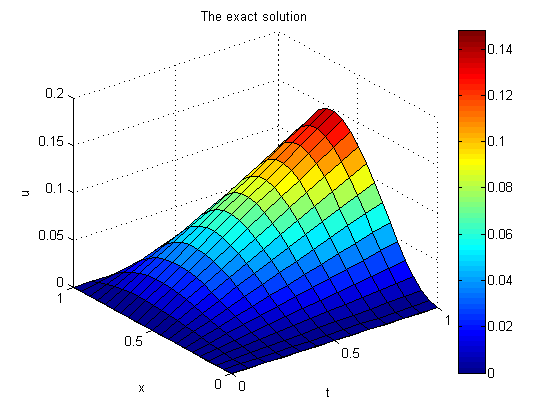}
\par\end{centering}

\caption{The exact solution $u_{ex}=tx^{2}\left(1-x\right)$.}
\end{figure}

\par\end{center}

\begin{figure}
\noindent \begin{centering}
\includegraphics[scale=0.43]{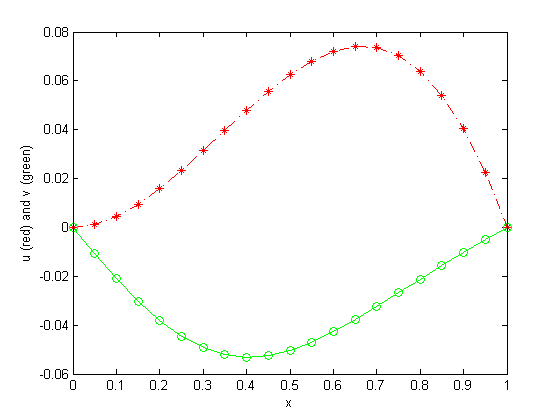}\includegraphics[scale=0.43]{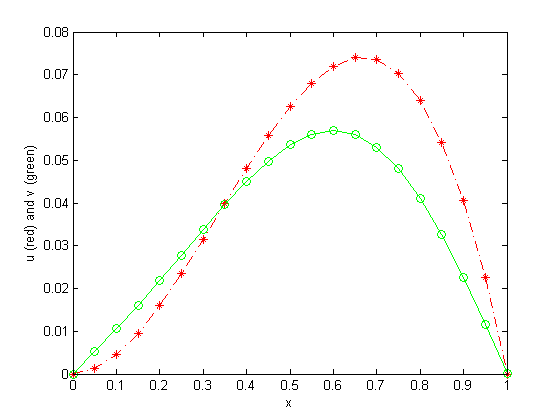}
\par\end{centering}

\noindent \begin{centering}
\includegraphics[scale=0.43]{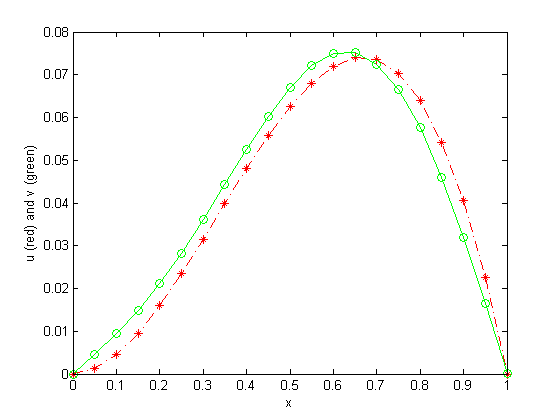}\includegraphics[scale=0.43]{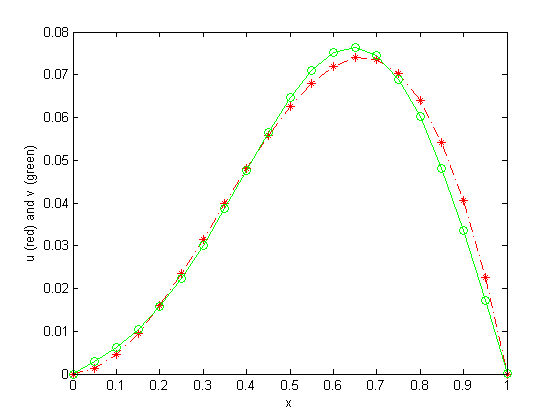}
\par\end{centering}

\caption{The regularized solution (green) and exact solution (red) at $t=\dfrac{1}{2}$
for $\epsilon=10^{-r}$ with $r=1;2;3;4$.}

\end{figure}

\begin{figure}
\noindent \begin{centering}
\includegraphics[scale=0.43]{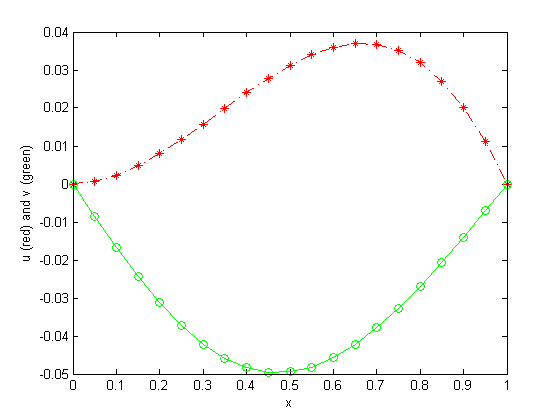}\includegraphics[scale=0.43]{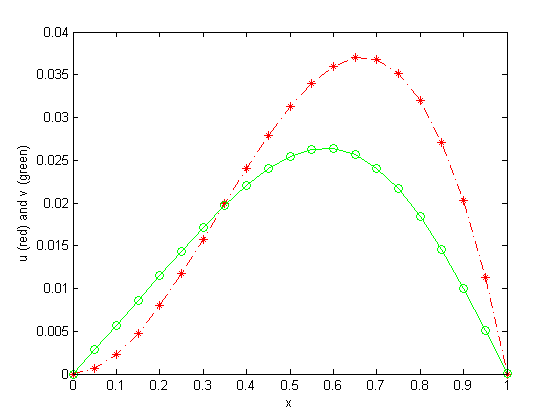}
\par\end{centering}

\noindent \begin{centering}
\includegraphics[scale=0.43]{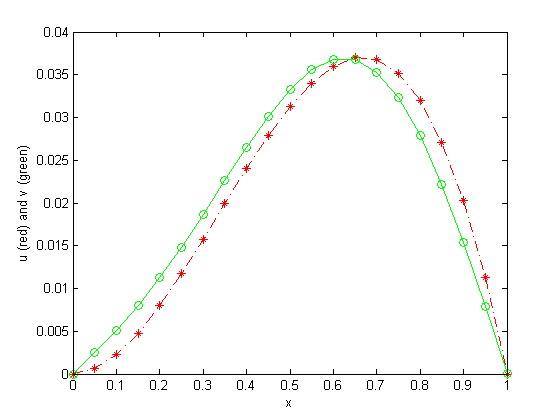}\includegraphics[scale=0.43]{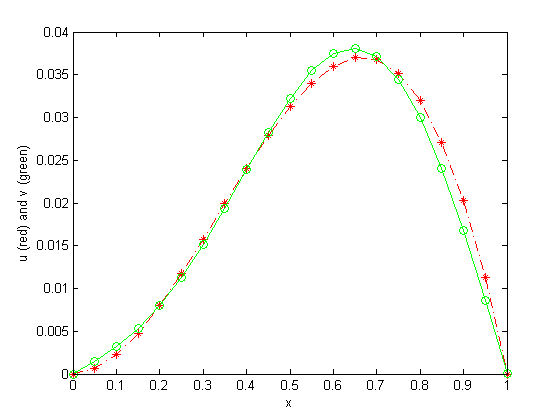}
\par\end{centering}

\caption{The regularized solution (green) and exact solution (red) at $t=\dfrac{1}{4}$
for $\epsilon=10^{-r}$ with $r=1;2;3;4$.}

\end{figure}

\begin{figure}
\noindent \begin{centering}
\includegraphics[scale=0.43]{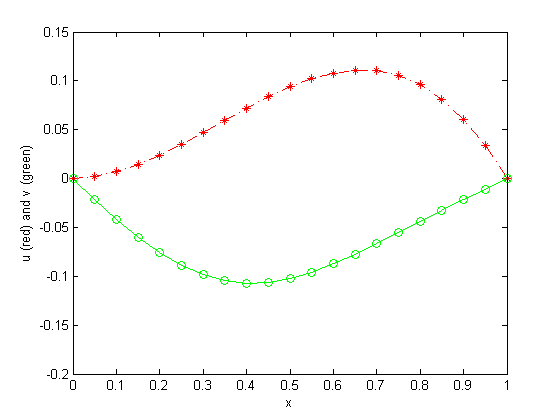}\includegraphics[scale=0.43]{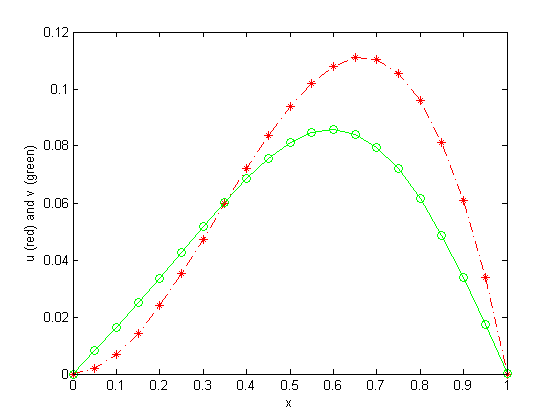}
\par\end{centering}

\noindent \begin{centering}
\includegraphics[scale=0.43]{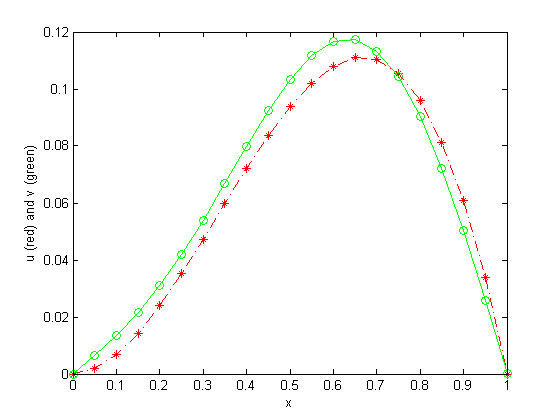}\includegraphics[scale=0.43]{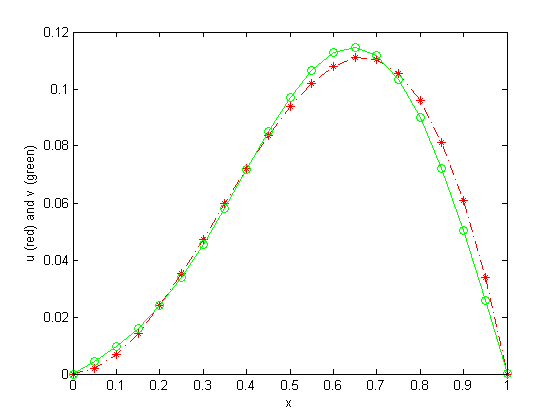}
\par\end{centering}

\caption{The regularized solution (green) and exact solution (red) at $t=\dfrac{3}{4}$
for $\epsilon=10^{-r}$ with $r=1;2;3;4$.}

\end{figure}

\begin{figure}
\noindent \begin{centering}
\includegraphics[scale=0.43]{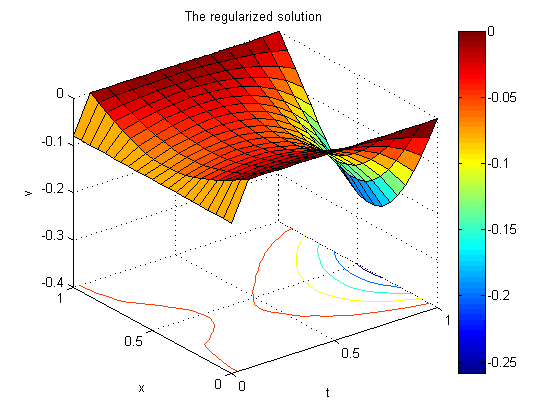}\includegraphics[scale=0.43]{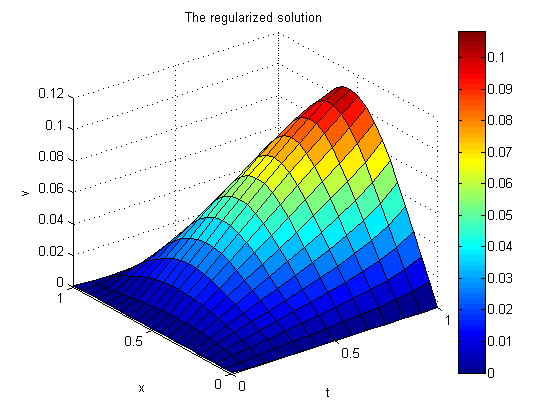}
\par\end{centering}

\noindent \begin{centering}
\includegraphics[scale=0.43]{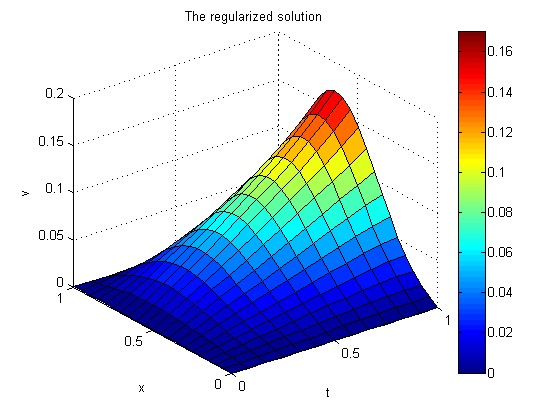}\includegraphics[scale=0.43]{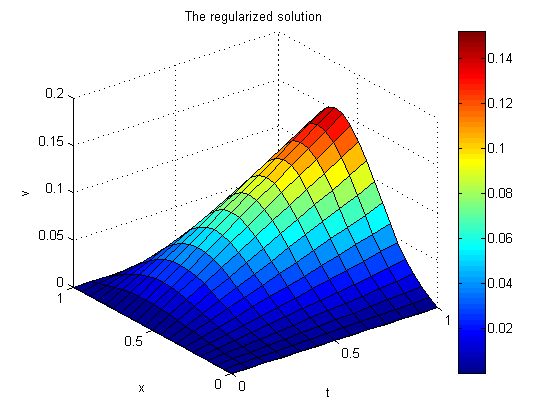}
\par\end{centering}

\caption{The regularized solution in 3D for $\epsilon=10^{-r}$ with $r=1;2;3;4$.}
\end{figure}

\begin{figure}
\noindent \begin{centering}
\includegraphics[scale=0.43]{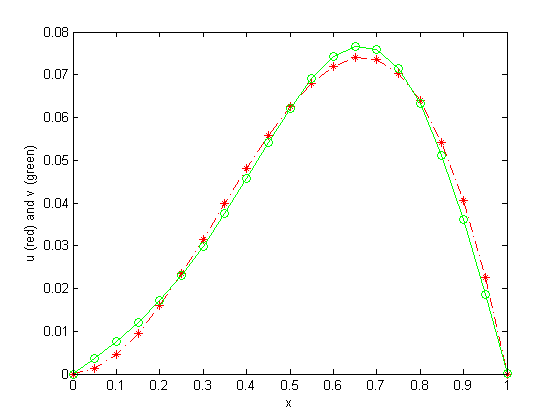}\includegraphics[scale=0.43]{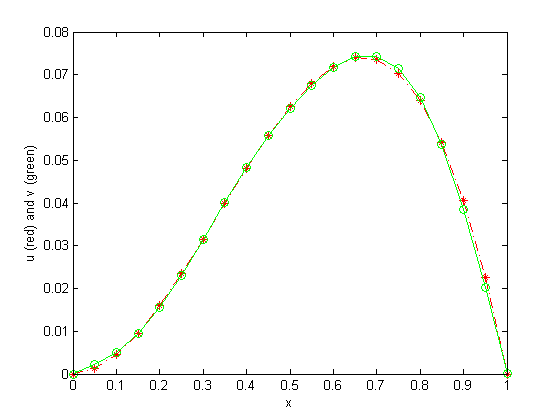}
\par\end{centering}

\caption{The regularized solution at $t=\dfrac{1}{2}$ for $N=3$ and $N=4$
with $\epsilon=10^{-4}$. }
\end{figure}

\begin{figure}
\noindent \begin{centering}
\includegraphics[scale=0.43]{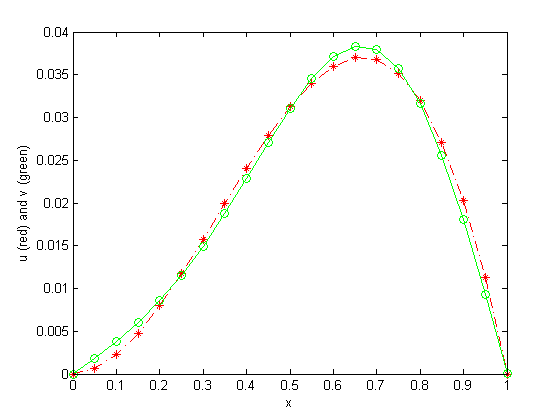}\includegraphics[scale=0.43]{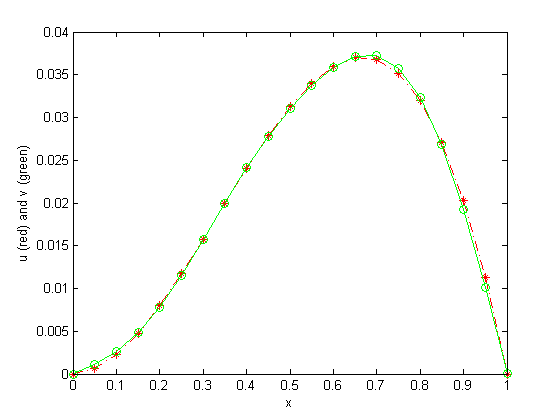}
\par\end{centering}

\caption{The regularized solution at $t=\dfrac{1}{4}$ for $N=3$ and $N=4$
with $\epsilon=10^{-4}$. }
\end{figure}

\begin{figure}
\noindent \begin{centering}
\includegraphics[scale=0.43]{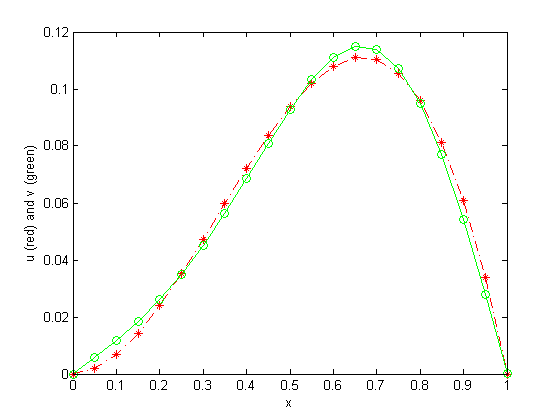}\includegraphics[scale=0.43]{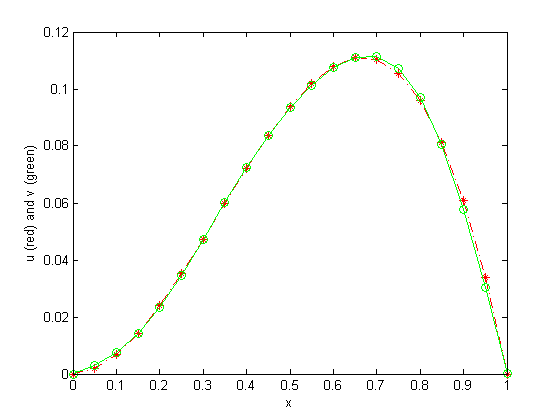}
\par\end{centering}

\caption{The regularized solution at $t=\dfrac{3}{4}$ for $N=3$ and $N=4$
with $\epsilon=10^{-4}$.}
\end{figure}

\end{document}